\setlist{nolistsep}
\numberwithin{equation}{section} \numberwithin{figure}{section}
\numberwithin{table}{section}
\theoremstyle{plain}
\newtheorem{thm}{Theorem}[section]
\newtheorem{lem}{Lemma}[section]
\newtheorem{rem}{Remark}[section]
\newcommand{\bm}{\boldsymbol}
\newcommand{\be}{\begin{equation}}
\newcommand{\ee}{\end{equation}}
\newcommand{\bse}{\begin{subequations}}
\newcommand{\ese}{\end{subequations}}
\def\benl{\begin{eqnarray*}}
\def\eenl{\end{eqnarray*}}
\def\be{\bm{e}}
\def\bmu1{\bm{\mu_1}}
\newcommand{\ben}{\begin{eqnarray}}
\newcommand{\een}{\end{eqnarray}}
\newcommand{\beq}{\begin{equation}}
\newcommand{\eeq}{\end{equation}}
\newcommand{\bea}{\begin{array}}
\newcommand{\eea}{\end{array}}
\newcommand{\bef}{\begin{figure}[H]}
\newcommand{\eef}{\end{figure}}
\def\part#1{\frac{\partial #1}{\partial t}}
\title
{Interaction of a vortex induced by a rotating cylinder with a  plane}
\author[1]{Daozhi Han \thanks{djhan@iu.edu}}
\author[1]{Yifeng Hou\thanks{houyifeng1005@hotmail.com}}
\author[1]{Roger Temam\thanks{temam@indiana.edu}}
\affil[1]{Department of Mathematics \& Institute for Scientific Computing and Applied Mathematics, Indiana University at Bloomington,
47406, USA}
\date{}
\begin{document}
\maketitle

\begin{abstract}

In this article,we study theoretically and numerically the interaction of a vortex induced by a rotating cylinder  with a perpendicular plane. We show the existence of weak solutions to the swirling vortex models by using the Hopf extension method, and by an elegant contradiction argument, respectively. We demonstrate numerically that the model could produce phenomena of swirling vortex  including boundary layer pumping and two-celled vortex that are observed in potential line vortex interacting with a plane and in a tornado.

\end{abstract}

\begin{keywords}
{Tornado,  Swirling Vortex, Navier-Stokes, axisymmetric flow, Boundary layer.}

\end{keywords}

\section{Introduction}
The aim of this article is to study the interaction of a vertical line vortex with a horizontal plane as a first simplified model for tornadoes. The study is partly theoretical and partly numerical.  In the analytical part, the line vortex is replaced by a vertical rotating cylinder of small radius $\sigma$, and we show the existence of an axisymmetric weak solution to the stationary Navier-Stokes equations, when $\sigma >0$. In the numerical part, we  examine carefully the fluid flow near the plane boundary and around the cylinder.

The study of a line vortex interacting with a plane perpendicular to the vortex core, even in the simple axisymmetric setting,  is important,   as the structures of the resulting swirling vortex (sometimes exact solutions to the stationary Navier-Stokes equations) can give insight into the  dramatic phenomenon of tornado. Goldshtik \cite{Goldshtik1960} discovered a family of conical self-similar swirling vortex solutions to the axisymmetric Navier-Stokes equations resulting from a vertical potential line vortex of constant circulation interacting with an infinite orthogonal plane . However, Goldshtik's vortex solutions exist only for small Reynolds numbers ($Re < Re^\ast=5.53$), since the solutions are assumed to be bounded at the vortex axis \cite{Goldshtik1990}. Serrin partially resolved Goldshtik's paradox and showed the existence of self-similar vortex solutions for all Reynolds number,  if one allows singularity formation at the vortex core \cite{Serrin1972}. Due to the lack of boundary conditions, Serrin's solutions depend on an additional parameter that needs to be specified as an input to the physical system. The parameter amounts to specifying a boundary condition for the pressure on the plane surface. Nevertheless, Serrin shows that  these solutions have rich structures including two-celled vortex. Recently, there has been research on modifying the scaling of the velocity/radial distance dependence in Serrin's vortex solutions, based on radar data observation \cite{BDSS2014}.

The major controversy in Serrin's idealized model of a line vortex stems from the vortex singularity at the vortex axis which serves as a source of momentum. In the present work, we regularize the line vortex by a rotating cylinder of small radius. Serrin's model of a line vortex can be viewed as an asymptotic limit of the rotating cylinder when its radius approaches zero. Moreover, as the vortex singularity is regularized, no additional physical parameters are needed in our model other than the circulation and kinematic viscosity.

Now we describe the problem of tornado-like vortex driven by  a uniform rotating cylinder of a small radius in details. 
The natural coordinates system for this problem is the cylindrical coordinates $(r, \theta, z)$. Let $(u,v, w)$ be the velocity vector in the cylindrical coordinates. Then the axisymmetric flow is governed by the following dimensional steady-state Navier-Stokes equations
\begin{align}
&u \frac{\partial u}{\partial r}+ w\frac{\partial u}{\partial z}-\frac{v^2}{r}=-\frac{1}{\rho}\frac{\partial p}{\partial r} +\nu (\frac{\partial ^2 u}{\partial r^2}+\frac{1}{r}\frac{\partial u}{\partial r}-\frac{u}{r^2}+\frac{\partial^2 u}{\partial z^2}), \label{NS1}\\
&u \frac{\partial v}{\partial r}+ w\frac{\partial v}{\partial z}+\frac{uv}{r}=\nu (\frac{\partial ^2 v}{\partial r^2}+\frac{1}{r}\frac{\partial v}{\partial r}-\frac{v}{r^2}+\frac{\partial^2 v}{\partial z^2}), \label{NS2}\\
&u \frac{\partial w}{\partial r}+ w\frac{\partial w}{\partial z}=-\frac{1}{\rho}\frac{\partial p}{\partial z} +\nu (\frac{\partial ^2 w}{\partial r^2}+\frac{1}{r}\frac{\partial w}{\partial r}+\frac{\partial^2 w}{\partial z^2}), \label{NS3}\\
&\frac{\partial(ru) }{\partial r}+\frac{\partial (rw)}{\partial z}=0,  \label{NS4}
\end{align}
where $\rho$ is the density, and $\nu$ is the kinematic viscosity. The domain is defined as $\Omega=\{(r, \theta, z) \quad | \quad r\geq \sigma, \theta \in (0, 2\pi), 0\leq z \leq L\}$. We decompose the boundary $\partial \Omega$ into several parts such that $\partial \Omega=\Gamma_i \cup \Gamma_l \cup \Gamma_u$ with $\Gamma_i=\{r\geq \sigma, z=0\}$, $\Gamma_l=\{r= \sigma, 0\leq z \leq L\}$ and $\Gamma_u=\{r\geq \sigma, z=L\}$. The boundary conditions for the flow that we would like to impose are
\begin{align}\label{NS_bc}
&(u, v, w)|_{\Gamma_i}=0, \quad (u, v, w)|_{\Gamma_l}=(0, \frac{\gamma}{2\pi \sigma}, 0), \quad (\frac{\partial u}{\partial z}, \frac{\partial v}{\partial z}, w)|_{\Gamma_u}=0.
\end{align}
Here $\gamma$ is the circulation prescribed on the surface of the  cylinder.
Recall that a potential vortex line is given by $u=w=0, v=\frac{\gamma}{2\pi r}=-\phi^\prime(r)$ with the potential $\phi(r)=-\frac{\gamma}{2\pi } \ln r$. The boundary condition $v|_{\Gamma_l}$ reflects  the condition given by the line vortex. The boundary conditions on $\Gamma_u$ are no-penetration  plus free-slip boundary conditions. In the development below, we will impose additional boundary conditions either $(u,v,w) \rightarrow (0, 0, 0)$ as $r\rightarrow \infty$ when the domain is unlimited in the radial direction, or $(u, v, \frac{\partial w}{\partial r})=(0, 0, 0) \text{ on } r=R$ when the domain is limited to $\sigma \leq r \leq R$ as below in Section 3, with a large $R>0$. These boundary conditions are chosen for convenience and are commonly adopted for the study of tornadoes.

Our model \eqref{NS1}-\eqref{NS4} with the boundary conditions \eqref{NS_bc} is a regularization of the problem of the interaction of a potential line vortex with an orthogonal plane, in which the  axis of singularity of the potential line vortex is replaced by a rotating cylinder of small radius. Indeed, the solution to the system without the plane boundaries $\Gamma_i, \Gamma_u$ is given by $u=w=0, v=\frac{\gamma}{2\pi r}$ where $r\geq \sigma$. Hence this solution can be viewed as a truncation of the potential line vortex away from its singular core. In our numerical simulation, we will also explore different combinations of the rotation speed of the cylinder, i.e., other than the uniform rotation $v|_{\Gamma_l}=\frac{\gamma}{2\pi \sigma}$. In this regard, the rotating cylinder can be viewed as an approximation (simplification) of a tornado funnel which is least understood to date.

The system \eqref{NS1}-\eqref{NS4} is the stationary axisymmetric Navier-Stokes equations posed in a Lipschitz domain with mixed $L^p$ Dirichlet boundary data (piecewise-constant) and homogeneous Neumann boundary data \eqref{NS_bc}. The existence of solutions in appropriate weak sense is a challenging problem. For the physical problem of vortex interacting with an orthogonal plane, the major difficulty with the model \eqref{NS1}-\eqref{NS_bc} comes from the mismatch of the boundary data at the corner $r=\sigma, z=0$. In Sec. 2, we smooth out the discontinuity and employ a boundary condition inspired by Hopf's construction of background flow, cf. \cite{Hopf1951} (see also \cite{Ladyzhenskaya1969,Temam1977}). With this slightly modified boundary condition, we are able to establish the existence of weak solutions as in the  classical weak theory for Navier-Stokes equations. The method of Hopf's extension relies on that the boundary data has some regularity (for instance, in $H^{\frac12}$). On the other hand, the well-posedness and regularity of the Stokes problem with $L^p$ Dirichlet boundary condition in Lipschtiz domain has been studied intensively, for instance, in \cite{FKV1988, Shen1995, MiTa2001}. Based on the regularity property of the Stokes problem, we employ an elegant contradiction argument to show existence of weak solutions to the system \eqref{NS1}-\eqref{NS4} under the original boundary condition \eqref{NS_bc}. See Sec. 3 for details. Such a contradiction argument was first introduced by Leray when studying the existence of solutions to the stationary Navier-Stokes equations; it  was explored by many authors later on, cf.  \cite{Ladyzhenskaya1969, Amick1984, KPR2013}. More recently, the authors in \cite{KPR2015} utilized this technique to solve the Leray's conjecture.

In Sec. 4, we present the numerical study of the swirling vortex induced by a rotating cylinder
in the presence of a perpendicular static plane. We observe that as we shrink the radii of the cylinder, the flow configuration tends to the one observed in the case of potential line vortex interacting with a plane. In particular, the unique phenomenon of boundary layer pumpiing associated with rotating fluid in the vicinity of solid wall is verified in the numerical simulation of the model. Moreover, we demonstrate that rich flow pattern, including two-celled vortex, can be produced by exploiting different combinations of rotation speed of the cylinder. The rotating cylinder model can serve as a new platform for the study of flow in a tornado funnel that is less understood. 

\section{Existence of solutions via Hopf's Extension} 
One immediately realizes that the boundary conditions at the``corner'' are mismatched. In order to be able to show the existence of a solution and have a continuous boundary condition, we could change slightly the boundary conditions and adopt the following boundary condition inspired  by the construction of Hopf \cite{Hopf1951}, see also \cite{Temam1977}. Define a cut-off function $\xi_\epsilon(r)$ such that
\begin{equation}
\xi_\epsilon(r)=\left\{
\begin{aligned}
&1, \quad \sigma \leq r \leq \sigma+\delta^2,  \\
&\epsilon \ln(\frac{\delta}{r-\sigma}), \quad \sigma+\delta^2 \leq r \leq \sigma+ \delta, \\
& 0, \quad r \geq \sigma+\delta,
\end{aligned}
\right.
\end{equation}
where $0< \epsilon <1$ is to be determined and $\delta=e^{-1/\epsilon}$. It is clear that $\xi_\epsilon$ is continuous and piecewise smooth. In particular, one has
\begin{equation}
\xi_\epsilon^\prime(r)=\left\{
\begin{aligned}
&0, \quad \sigma \leq r \leq \sigma+\delta^2,  \\
&-\frac{\epsilon}{r-\sigma}, \quad \sigma+\delta^2 \leq r \leq \sigma+ \delta, \\
& 0, \quad r \geq \sigma+\delta.
\end{aligned}
\right.
\end{equation}
One can further regularize $\xi_\epsilon$ by Friedrich's mollifier or by averaging such that the regularized cut-off function $\theta_\epsilon$ satisfies $\theta_\epsilon \in C^2(\sigma, \infty)$ and $supp (\theta_\epsilon) \subset (\sigma, \sigma+\delta+\frac{\delta^2}{2})$.  Moreover, one has
\begin{equation}\label{ProTheta}
\theta_\epsilon(r)=\left\{
\begin{aligned}
&1, \quad \sigma \leq r \leq \sigma+\frac{\delta^2}{2},  \\
& 0, \quad r \geq \sigma+\delta+\frac{\delta^2}{2},
\end{aligned}
\right.
\end{equation}
and $|\theta_\epsilon^\prime(r)| \leq \frac{\epsilon}{r-\sigma}$ if $\sigma+\frac{\delta^2}{2} \leq r \leq \sigma+\delta+\frac{\delta^2}{2}$. 

Recall the stream function for the potential vortex $\phi(r)=-\frac{\gamma}{2\pi}\ln(r)$. We introduce  the background flow related to the vortex line motion, namely $(\tilde{u}, \tilde{v}, \tilde{w})=(0, -\frac{\partial (\theta_\epsilon \phi)}{\partial r}, 0)$. It is clear from Eq. \eqref{ProTheta} that 
\begin{align}
& (\tilde{u}, \tilde{v}, \tilde{w})|_{\Gamma_l}=(0, \frac{\gamma}{2\pi \sigma}, 0), \\
& (\tilde{u}, \tilde{v}, \tilde{w})|=(0, 0, 0), \quad {\text{at }} z=0, r \geq \sigma +\delta+\frac{\delta^2}{2}. 
\end{align}
Moreover, one has  $\text{div}(\tilde{u}, \tilde{v}, \tilde{w})=\frac{1}{r}\frac{\partial (r \tilde{u})}{\partial r}+\frac{1}{r}\frac{\partial \tilde{v}}{\partial \theta}+\frac{\partial \tilde{w}}{\partial z}=0$ by construction. Note that the background field $(\tilde{u}, \tilde{v}, \tilde{w})$ depends on $\epsilon$. Here we do not mark this dependence for convenience.

With the help of the background flow $(\tilde{u}, \tilde{v}, \tilde{w})$, we replace the boundary conditions \eqref{NS_bc} by
\begin{align}\label{NS_bc_new}
(u, v, w)|_{\Gamma_i \cup \Gamma_l}=(\tilde{u}, \tilde{v}, \tilde{w})|_{\Gamma_i \cup \Gamma_l}, \quad (\frac{\partial u}{\partial z}, \frac{\partial v}{\partial z}, w)|_{\Gamma_u}=0.
\end{align}
We note that the difference between the boundary conditions \eqref{NS_bc} and \eqref{NS_bc_new} is restricted to the plane $z=0, \sigma < r < \sigma +\delta +\frac{\delta^2}{2}$ of small area.

In Hopf's work (see \cite{Hopf1951, Temam1977}) one then determines $\epsilon$ such that the stationary NS system \eqref{NS1}-\eqref{NS4} equipped with the boundary conditions \eqref{NS_bc_new} is ``coercive'' so that one can prove the existence of a weak solution (see below). We proceed in a formal way for the moment. Define a new velocity $(u^\prime, v^\prime, w^\prime)$ via 
\begin{align*}
u=u^\prime+\tilde{u}=u^\prime, \quad v=v^\prime+\tilde{v}, \quad w=w^\prime+\tilde{w}=w^\prime.
\end{align*}
One sees that the new velocity $(u^\prime, v^\prime, w^\prime)$ satisfies the following equations and boundary conditions, dropping the primes for notational simplicity,
\begin{align}
&u \frac{\partial u}{\partial r}+ w\frac{\partial u}{\partial z}-\frac{(v+\tilde{v})^2}{r}=-\frac{1}{\rho}\frac{\partial p}{\partial r} +\nu \Delta_{r, z} u, \label{NS1_new}\\
&u \frac{\partial v}{\partial r}+u \frac{\partial \tilde{v}}{\partial r}+ w\frac{\partial v}{\partial z}+\frac{u(v+\tilde{v})}{r}=\nu (\Delta_{r, z} v + \Delta_{r, z} \tilde{v}), \label{NS2_new}\\
&u \frac{\partial w}{\partial r}+ w\frac{\partial w}{\partial z}=-\frac{1}{\rho}\frac{\partial p}{\partial z} +\nu (\frac{\partial ^2 w}{\partial r^2}+\frac{1}{r}\frac{\partial w}{\partial r}+\frac{\partial^2 w}{\partial z^2}), \label{NS3_new}\\
&\frac{\partial(ru) }{\partial r}+\frac{\partial (rw)}{\partial z}=0,  \label{NS4_new} \\
& (u, v, w)|_{\Gamma_i \cup \Gamma_l}=0, \quad (\frac{\partial u}{\partial z}, \frac{\partial v}{\partial z}, w)|_{\Gamma_u}=0, \quad (u,v,w) \rightarrow 0, \text{ as } r\rightarrow \infty, \label{NS5_new}
\end{align}
where $\Delta_{r, z}$ is the operator $\Delta_{r, z}:= \frac{\partial^2}{\partial r^2}+ \frac{1}{r} \frac{\partial}{\partial r}-\frac{1}{r^2}+\frac{\partial^2}{\partial z^2}$.

To study the system \eqref{NS1_new}-\eqref{NS5_new}, it is convenient to work with the domain $\Omega^\prime:=\{(r,z), \sigma \leq r < \infty, 0\leq z \leq L\}$. We introduce the space
\begin{align*}
V_1=\Big\{v \big| \int_{\Omega^\prime} [(\frac{\partial v}{\partial r})^2+(\frac{\partial v}{\partial z})^2]\, rdrdz < \infty, v|_{z=0}=v|_{r=\sigma}=0\Big\}.
\end{align*}
One can verify that $V_1$ is a Hilbert space when equipped with the $H^1$-inner product:
\begin{align*}
(v, v_1)_{V_1}=\int_{\Omega^\prime} \big(\frac{\partial v}{\partial r} \frac{\partial v_1}{\partial r}+\frac{\partial v}{\partial z} \frac{\partial v_1}{\partial z}\big) \, rdrdz.
\end{align*}
We then define another space
\begin{align*}
{\bf{V}}_2=\Big\{(u, w) \in V_1 \times V_1 \big| \frac{\partial (ru)}{\partial r}+\frac{\partial (rw)}{\partial z}=0, w|_{z=L}=0\Big\}.
\end{align*}
The weak formulation of the system \eqref{NS1_new}-\eqref{NS5_new} reads as follows: find $({\bf{U}}, v):=(u, w, v) \in {\bf{V}_2} \times V_1 $ such that for any $({\bf{U}}_1, v_1):=(u_1, w_1, v_1) \in {\bf{V}}_2 \times V_1$, there holds
\begin{align}
&\int_{\Omega^\prime}({\bf{U}} \cdot \nabla){\bf{U}} \cdot {\bf{U}}_1 \, rdrdz -\int_{\Omega^\prime}\frac{(v+\tilde{v})^2}{r} u_1 \, rdrdz= -\nu \int_{\Omega^\prime} \nabla {\bf{U}}: \nabla {\bf{U}}_1\, rdrdz -\nu \int_{\Omega} \frac{u u_1}{r^2}\, rdrdz, \label{WNS1} \\
&\int_{\Omega^\prime} ({\bf{U}} \cdot \nabla v) v_1\, rdrdz+\int_{\Omega^\prime} u \frac{\partial \tilde{v}}{\partial r} v_1\, rdrdz + \int_{\Omega^\prime} \frac{u(v+\tilde{v})}{r} v_1\, rdrdz=-\nu \int_{\Omega^\prime} \nabla v \cdot \nabla v_1\, rdrdz   \label{WNS2}\\
& \quad \quad -\nu \int_{\Omega^\prime} \nabla \tilde{v} \cdot \nabla v_1\, rdrdz -\nu  \int_{\Omega} \frac{(v+\tilde{v}) v_1}{r^2}\, rdrdz. \nonumber
\end{align}

Now we derive certain  energy estimates of  weak solutions to the system \eqref{WNS1}-\eqref{WNS2}. Taking the test functions ${\bf U}_1={\bf U}$ in Eq. \eqref{WNS1} and $v_1=v$ in Eq. \eqref{WNS2}, summing up the resulting equations,  one obtains
\begin{align}\label{EnerEs1}
&\nu \int_{\Omega^\prime} r(\frac{\partial u}{\partial r})^2 +\frac{u^2}{r}+r (\frac{\partial u}{\partial z})^2 \, dr  dz + \nu \int_{\Omega^\prime} r(\frac{\partial v}{\partial r})^2 +\frac{v^2}{r}+r (\frac{\partial v}{\partial z})^2 \, dr  dz   \\ \nonumber 
& + \nu \int_{\Omega^\prime} r(\frac{\partial w}{\partial r})^2+r (\frac{\partial w}{\partial z})^2 \, dr  dz =\int_{\Omega^\prime} uv \tilde{v}+ u \tilde{v}^2 \, dr  dz -\nu \int_{\Omega^\prime} r \frac{\partial \tilde{v}}{\partial r} \frac{\partial v}{\partial r} +\frac{\tilde{v} v}{r} \, dr  dz.
\end{align}
For the last three terms in Eq. \eqref{EnerEs1}, one applies the Cauchy-Schwarz inequality as follows

\begin{align*}
\big|\int_{\Omega^\prime}u \tilde{v}^2 -\nu  r \frac{\partial \tilde{v}}{\partial r} \frac{\partial v}{\partial r} +\nu \frac{\tilde{v} v}{r} \, dr  dz \big| &\leq \frac{1}{2\nu} \int_{\Omega^\prime} r \tilde{v}^4 \, dr  dz +\frac{\nu}{2} \int_{\Omega^\prime} \frac{u^2}{r} \, dr  dz \\
 & +\frac{\nu}{2} \int_{\Omega^\prime}r(\frac{\partial \tilde{v}}{\partial r})^2 +\frac{\tilde{v}^2}{r} \, dr  dz+\frac{\nu}{2} \int_{\Omega^\prime}r(\frac{\partial v}{\partial r})^2 +\frac{v^2}{r} \, dr  dz.
\end{align*}
Then  Eq. \eqref{EnerEs1} becomes
\begin{align}\label{EnerEs2}
&\nu \int_{\Omega^\prime} r(\frac{\partial u}{\partial r})^2 +\frac{u^2}{2r}+r (\frac{\partial u}{\partial z})^2 \, dr  dz + \nu \int_{\Omega^\prime} \frac{r}{2}(\frac{\partial v}{\partial r})^2 +\frac{v^2}{2r}+r (\frac{\partial v}{\partial z})^2 \, dr  dz   \\ \nonumber 
& + \nu \int_{\Omega^\prime} r(\frac{\partial w}{\partial r})^2+r (\frac{\partial w}{\partial z})^2 \, dr  dz \leq \int_{\Omega^\prime} uv \tilde{v}\, dr  dz +\frac{\nu}{2} \int_{\Omega^\prime}r(\frac{\partial \tilde{v}}{\partial r})^2 +\frac{\tilde{v}^2}{r} \, dr  dz+ \frac{1}{2\nu} \int_{\Omega^\prime} r \tilde{v}^4 \, dr  dz.
\end{align}

To derive an a priori estimate on $(\mathbf{U}, v)$ from Eq. \eqref{EnerEs2}, we need to properly bound the integral $\int_{\Omega^\prime} uv \tilde{v}\, dr  dz$ appearing in the RHS of \eqref{EnerEs2} and proceed as in \cite{Hopf1951, Temam1977}. For that purpose,  we show that we can choose $\epsilon$ such that $|\int_{\Omega^\prime} uv \tilde{v}\, dr  dz|$ can be arbitrarily small in some sense. One has $|\int_{\Omega^\prime} uv \tilde{v}\, dr  dz| \leq ||u/r||_{L^2} ||v\tilde{v}||_{L^2}$. We control $||v\tilde{v}||_{L^2}$ by following the proof of Lemma $1.8$, Chapter 2,  in \cite{Temam1977}. Recall that $\tilde{v}=-\frac{\partial (\theta_\epsilon \phi)}{\partial r}$. One has
 \begin{align}\label{EsBack}
 |\tilde{v}| \leq C (\frac{\epsilon}{r-\sigma}+|\phi^\prime(r)|). 
 \end{align}
 It then follows from Hardy's inequality and H\"{o}lder's inequality that 
\begin{align*}
||v\tilde{v}||_{L^2} &\leq \epsilon||\frac{v}{r-\sigma}||_{L^2}+ \Big(\int_{(z, \theta)} \big(\int_{\sigma}^{\sigma+\delta +\frac{\delta^2}{2}} v^2|\phi^\prime|^2 r\, dr\big)dzd\theta\Big)^{\frac{1}{2}}  \\
&\leq C\epsilon||\frac{\partial v}{\partial r}||_{L^2}+\Big(\int_\sigma^{\sigma+\delta+\frac{\delta^2}{2}}(\phi^\prime)^4 r^2\, dr \Big)^{\frac{1}{4}} \Big(\int_{(z, \theta)} \big(\int_{\sigma}^{\sigma+\delta +\frac{\delta^2}{2}} v^4\, dr\big)^{\frac{1}{2}}dzd\theta\Big)^{\frac{1}{2}} \\
&\leq  C\epsilon||\frac{\partial v}{\partial r}||_{L^2} +C \big(\frac{\delta}{\sigma^2}\big)^{\frac{1}{4}} \Big(\int_{(z, \theta)} \big(\int_{\sigma}^{\sigma+\delta +\frac{\delta^2}{2}} v^4\, dr\big)^{\frac{1}{2}}dzd\theta\Big)^{\frac{1}{2}},
\end{align*}
where $C$ is a constant independent of $\sigma, \epsilon$ and $\delta$ which may be different at each occurrence.
Now one has
\begin{align*}
 \Big(\int_{\sigma}^{\sigma+\delta +\frac{\delta^2}{2}} v^4\, dr\Big)^{\frac{1}{2}} &\leq \Big(\int_{\sigma}^{\sigma+\delta +\frac{\delta^2}{2}} \big(\int_\sigma^r (\frac{\partial v (\tau, z)}{\partial \tau})^2 \, d\tau \big)^2(r-\sigma)^2 \, dr\Big) \\
 & \leq \int_{\sigma}^{\sigma+\delta +\frac{\delta^2}{2}} (\frac{\partial v}{\partial r})^2\, dr \Big(\int_{\sigma}^{\sigma+\delta +\frac{\delta^2}{2}} (r-\sigma)^2\, dr \Big)^{\frac{1}{2}}\\
 &\leq C \delta^3  \int_{\sigma}^{\sigma+\delta +\frac{\delta^2}{2}} (\frac{\partial v}{\partial r})^2\, dr \\
 &\leq C\frac{\delta^3}{\sigma}\int_{\sigma}^{\sigma+\delta +\frac{\delta^2}{2}} r(\frac{\partial v}{\partial r})^2\, dr.
\end{align*}
Collecting the above two inequalities, one concludes that 
\begin{align}\label{KeyEs}
||v\tilde{v}||_{L^2} &\leq (C\epsilon+C \frac{\delta^\frac{7}{4}}{{\sigma}}) ||\frac{\partial v}{\partial r}||_{L^2}.
\end{align}
Hence,
\begin{align*}
|\int_{\Omega^\prime} uv \tilde{v}\, dr  dz| \leq \frac{\nu}{4}||u/r||_{L^2}^2 + \frac{C}{\nu}(\epsilon+ \frac{\delta^\frac{7}{4}}{{\sigma}}) ||\frac{\partial v}{\partial r}||_{L^2}^2.
\end{align*}

Recall that $\delta=e^{-1/\epsilon}$. Let $\epsilon$ be sufficiently small so that $C (\epsilon+\frac{\delta^\frac{7}{4}}{{\sigma}}) \leq \frac{\nu^2}{4}$. Then the inequality \eqref{EnerEs2} becomes
\begin{align}\label{EnerEs3}
&\nu \int_{\Omega^\prime} r(\frac{\partial u}{\partial r})^2 +\frac{u^2}{4r}+r (\frac{\partial u}{\partial z})^2 \, dr  dz + \nu \int_{\Omega^\prime} \frac{r}{4}(\frac{\partial v}{\partial r})^2 +\frac{v^2}{2r}+r (\frac{\partial v}{\partial z})^2 \, dr  dz   \\ \nonumber 
& + \nu \int_{\Omega^\prime} r(\frac{\partial w}{\partial r})^2+r (\frac{\partial w}{\partial z})^2 \, dr  dz \leq \frac{\nu}{2} \int_{\Omega^\prime}r(\frac{\partial \tilde{v}}{\partial r})^2 +\frac{\tilde{v}^2}{r} \, dr  dz+ \frac{1}{2\nu} \int_{\Omega^\prime} r \tilde{v}^4 \, dr  dz.
\end{align}
One can thus obtain bounds for $||\nabla(u, v, w)||_{L^2}$, hence bounds for $||(u, v, w)||_{H^1}$ by Poincar\'{e}'s inequality. This would allow the passage to limit in the procedure of Galerkin approximation in any compact sub-domain of $\Omega^\prime$, cf. \cite{Temam1977} for details. In conclusion, we  have proven the following theorem.
\begin{thm}\label{ThmEx}
The steady axisymmetric Navier-Stokes system \eqref{NS1}--\eqref{NS4} equipped with the boundary conditions \eqref{NS_bc_new} admits a weak solution in the sense that $(u, v-\tilde{v}, w)$ satisfies the weak formulation \eqref{WNS1}-\eqref{WNS2}. 
\end{thm}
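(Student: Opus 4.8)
The plan is to produce the weak solution by the Galerkin method, taking the a priori bound \eqref{EnerEs3} as the uniform estimate for the approximating sequence. Since $\mathbf{V}_2$ is the closed subspace of $V_1\times V_1$ cut out by the linear constraints $\frac{\partial(ru)}{\partial r}+\frac{\partial(rw)}{\partial z}=0$ and $w|_{z=L}=0$, it is a separable Hilbert space and admits a countable basis; fix a basis $\{(\mathbf{U}_k,v_k)\}_{k\ge 1}$ of $\mathbf{V}_2\times V_1$. The divergence-free constraint can be made explicit by representing $(u,w)$ through an axisymmetric (Stokes) stream function $\psi$ via $ru=-\partial_z\psi,\ rw=\partial_r\psi$, which lands automatically in the incompressible space, so a concrete basis is available if desired.

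First I would solve the finite-dimensional problem. For each $m$, seek $(\mathbf{U}^m,v^m)=\sum_{k=1}^m \xi_k(\mathbf{U}_k,v_k)$ satisfying \eqref{WNS1}-\eqref{WNS2} against every test function in $\mathrm{span}\{(\mathbf{U}_k,v_k):1\le k\le m\}$. This is a finite system of quadratic equations $P(\xi)=0$ for the coefficient vector $\xi\in\mathbb{R}^m$, with $P$ continuous. Testing the approximate equations against $(\mathbf{U}^m,v^m)$ itself makes the purely convective trilinear terms cancel exactly as in the passage from \eqref{WNS1}-\eqref{WNS2} to \eqref{EnerEs1}; the remaining bounds \eqref{KeyEs}-\eqref{EnerEs3} then give $(P(\xi),\xi)\ge c\|(\mathbf{U}^m,v^m)\|^2-C$, with $c,C$ depending only on the data and not on $m$. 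Hence $(P(\xi),\xi)>0$ once $|\xi|$ is large, and a standard corollary of Brouwer's fixed point theorem (cf. \cite{Temam1977}) yields a zero $\xi^m$, i.e. an approximate solution $(\mathbf{U}^m,v^m)$ bounded uniformly in $\mathbf{V}_2\times V_1$.

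Next I would pass to the limit $m\to\infty$. The uniform bound lets me extract a subsequence with $(\mathbf{U}^m,v^m)\rightharpoonup(\mathbf{U},v)$ weakly in $\mathbf{V}_2\times V_1$; the linear viscous terms and all terms that are linear in the unknowns (including the couplings to the fixed background field $\tilde v$) pass to the limit by weak convergence, while the terms quadratic in the unknowns---the convective integrals and the $v^2/r$ contribution---require strong convergence. Here the unbounded radial extent of $\Omega^\prime$ is the principal obstacle, since there is no global compact embedding $V_1\hookrightarrow L^2(\Omega^\prime)$. I would exhaust $\Omega^\prime$ by the bounded subdomains $\Omega_R=\{\sigma\le r\le R,\ 0\le z\le L\}$, on each of which the weight $r$ is bounded above and below (using $\sigma>0$), so $H^1(\Omega_R)\hookrightarrow L^4(\Omega_R)$ is compact; a diagonal argument then gives strong $L^4_{\mathrm{loc}}(\Omega^\prime)$ convergence of the velocity. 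Testing against a fixed element of a dense family of compactly supported test functions reduces every integral to one over some $\Omega_R$, so the nonlinear terms converge, and a density argument extends the limiting identity to all of $\mathbf{V}_2\times V_1$.

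The main difficulty is thus the nonlinearity on the unbounded domain, handled by local compactness and exhaustion; the singular weights near $r=\sigma$ are a secondary concern already controlled by the Hardy-type estimate \eqref{KeyEs} and the coercivity of the $V_1$-norm, so they do not obstruct the limit. I would finish by checking that the limit $(\mathbf{U},v)=(u,w,v)$ satisfies \eqref{WNS1}-\eqref{WNS2}, so that $(u,\,v+\tilde v,\,w)$ is a weak solution of \eqref{NS1}-\eqref{NS4} under the boundary conditions \eqref{NS_bc_new} in the asserted sense.
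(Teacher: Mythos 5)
Your proposal is correct and takes essentially the same route as the paper: the paper's proof consists precisely of the a priori estimate \eqref{EnerEs3} obtained via the Hopf extension, followed by a Galerkin approximation with passage to the limit on compact sub-domains of $\Omega^\prime$, with the details delegated to \cite{Temam1977}. Your write-up simply makes explicit the steps the paper leaves to that citation (Brouwer-type solvability of the finite-dimensional systems, local compact embedding, exhaustion of the unbounded domain by $\Omega_R$, and density of compactly supported test functions).
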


\section{Existence of solutions without extension}
We now study the axisymmetric Navier-Stokes system \eqref{NS1}--\eqref{NS4} with the original boundary conditions \eqref{NS_bc},  and establish the existence of solutions weaker than that in Theorem  \ref{ThmEx}. For technical convenience, we introduce an artificial boundary at far field $r=R$ for a large constant $R>0$, denoted by $\Gamma_3$. On $\Gamma_3$, the following boundary conditions are imposed
\begin{align}\label{bc_g3}
(u, v, w)=(0, 0, 0), \text{ on } r=R.
\end{align}
The artificial boundary condition \eqref{bc_g3} will be used in the numerical simulations below as well. 

\begin{rem}
The boundary condition $v=0$ at $\Gamma_3$ ensures that the linear axisymmetric Stokes system with mixed Dirichlet-Neumann boundary  condition can be converted to a Stokes system with purely Dirichlet boundary condition via reflection in the $z$ direction, see Eqs. \eqref{ne_vs} and \eqref{ne_bc} below. Hence the  theory of Stokes system with $L^p$ boundary data in a Lipschitz domain is applicable. We point out that if the free-slip boundary condition is imposed on $\Gamma_3$, i.e.,   $\frac{\partial v}{\partial r}|_{r=R}=0$, one would have to make an even extension in the radial direction in order to obtain a Stokes problem with Dirichlet boundary condition. However,  the Eq. \eqref{ne_vs} is not invariant with even extension in the $r$ direction. In this case, one has to study a Stokes system with mixed Dirichlet-Neumann boundary condition (or more precisely, an elliptic equation with mixed boundary condition). For the latter case, we refer to \cite{BMMW2010, TOB2013, Lanzani2008, MaSh2011} and references therein for the latest developments.
\end{rem}

Dropping the nonlinear terms, we first consider the axisymmetric Stokes system
\begin{align}
&-\frac{1}{\rho}\frac{\partial p_s}{\partial r} +\nu (\frac{\partial ^2 u_s}{\partial r^2}+\frac{1}{r}\frac{\partial u_s}{\partial r}-\frac{u_s}{r^2}+\frac{\partial^2 u_s}{\partial z^2})=0, \label{S1}\\
&\nu (\frac{\partial ^2 v_s}{\partial r^2}+\frac{1}{r}\frac{\partial v_s}{\partial r}-\frac{v_s}{r^2}+\frac{\partial^2 v_s}{\partial z^2})=0, \label{S2}\\
&-\frac{1}{\rho}\frac{\partial p_s}{\partial z} +\nu (\frac{\partial ^2 w_s}{\partial r^2}+\frac{1}{r}\frac{\partial w_s}{\partial r}+\frac{\partial^2 w_s}{\partial z^2})=0, \label{S3}\\
&\frac{\partial(ru_s) }{\partial r}+\frac{\partial (rw_s)}{\partial z}=0,  \label{S4}
\end{align}
with the same boundary conditions as in \eqref{NS_bc} and \eqref{bc_g3}. In particular, $v_s$ satisfies
\begin{align}
&\frac{\partial ^2 v_s}{\partial r^2}+\frac{1}{r}\frac{\partial v_s}{\partial r}-\frac{v_s}{r^2}+\frac{\partial^2 v_s}{\partial z^2}=0, \label{e_vs}\\
&v_s|_{z=0}=v_s|_{r=R}=0, v_s|_{\sigma}=\frac{\gamma}{2\pi \sigma}, \frac{\partial v_s}{\partial z}|_{z=L}=0. \label{e_bc}
\end{align}
 It is clear that $u_s=w_s=0$, $p_s=const$ and $v_s$ satisfying  Eq. \eqref{S2} equipped with  the boundary conditions \eqref{e_bc} is  solution to the system \eqref{S1}-\eqref{S4}. Now we consider an even extension of $v_s$ with respect to $z=L$, that is, we define $\tilde{v}_s$ such that 
 \begin{eqnarray*}
 \tilde{v}_s(r, z)=\left\{
 \begin{aligned}
 &v_s(r, z), \text{ for } 0\leq z \leq L, \\
 &v_s(r, 2L-z),  \text{ for } L\leq z \leq 2L,
 \end{aligned}
 \right.
 \end{eqnarray*}
 One can check that $\tilde{v}_s$ satisfies
 \begin{align}
&\frac{\partial ^2 \tilde{v}_s}{\partial r^2}+\frac{1}{r}\frac{\partial \tilde{v}_s}{\partial r}-\frac{\tilde{v}_s}{r^2}+\frac{\partial^2 \tilde{v}_s}{\partial z^2}=0, \label{ne_vs}\\
&\tilde{v}_s|_{z=0}=\tilde{v}_s|_{r=R}=\tilde{v}_s|_{z=2L}=0, \tilde{v}_s|_{r=\sigma}=\frac{\gamma}{2\pi \sigma}. \label{ne_bc}
\end{align}
Hence one only needs to consider the Stokes problem with $L^p$ Dirichlet boundary data in a domain with a  Lipschitz boundary. The following lemma summarizes the regularity results pertaining to Stokes system in Lipschitz domains, cf. \cite{FKV1988, Shen1995, MiTa2001}
\begin{lem}\label{lem_stokes}
Let $\Omega$ be a Lipschitz domain, $\mathbf{a}$ is the Dirichlet boundary data on $\partial \Omega$. If $\mathbf{a} \in L^2(\partial \Omega)$, then the `homogeneous Stokes equations with the Dirichlet boundary data $\mathbf{a}$ admits a unique solution (up to a constant for the pressure) such that  $(\mathbf{u}_s, p_s) \in [\mathbf{H}^{\frac{1}{2}}(\Omega) \cap W_{loc}^{2, 1}(\Omega)] \times W_{loc}^{1, 1}(\Omega)$. If $\mathbf{a} \in L^p(\Omega)$ for $p\in[2, \infty]$, then $\mathbf{u}_s \in W^{1/p, p}(\Omega)$.
\end{lem}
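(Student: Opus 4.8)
The plan is to establish the lemma by the method of hydrodynamic layer potentials, following the program of Fabes--Kenig--Verchota and its $L^p$ refinements \cite{FKV1988, Shen1995, MiTa2001}. First I would fix the Stokeslet, i.e. the fundamental solution tensor $(\mathbf{E}, \mathbf{q})$ of the Stokes operator, and introduce the single and double hydrodynamic layer potentials $\mathcal{S}\mathbf{f}$ and $\mathcal{D}\mathbf{g}$ on $\partial\Omega$ together with their associated pressure potentials. The first block of work is to record the mapping properties of these operators on a Lipschitz boundary: the non-tangential maximal functions of $\mathcal{S}\mathbf{f}$ and $\mathcal{D}\mathbf{g}$ are controlled in $L^2(\partial\Omega)$ by $\|\mathbf{f}\|_{L^2(\partial\Omega)}$ and $\|\mathbf{g}\|_{L^2(\partial\Omega)}$ respectively, and the double layer potential satisfies the jump relation $\mathcal{D}\mathbf{g}|_{\pm} = (\mp\tfrac{1}{2}I + K)\mathbf{g}$, with $K$ a principal-value singular integral operator. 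The $L^2$-boundedness of $K$ and of the boundary trace of $\mathcal{S}$ rests ultimately on the Coifman--McIntosh--Meyer theorem on the boundedness of Cauchy-type integrals on Lipschitz graphs.

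The heart of the argument is the invertibility of $\tfrac{1}{2}I + K$ on $L^2(\partial\Omega)$. I would obtain this from a Rellich-type identity adapted to the Stokes stress tensor: pairing the conormal derivative of a null solution against a transversal vector field on $\partial\Omega$ produces two-sided bounds comparing the tangential and normal components of the boundary stress, and these yield the a priori estimate $\|\mathbf{g}\|_{L^2(\partial\Omega)} \le C\,\|(\tfrac{1}{2}I+K)\mathbf{g}\|_{L^2(\partial\Omega)}$ together with the analogous bound for the transpose operator. Combined with a continuity (homotopy) argument connecting $\Omega$ to a smooth reference domain, this upgrades injectivity to surjectivity, hence invertibility. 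Representing the solution as $\mathbf{u}_s = \mathcal{D}\big((\tfrac{1}{2}I+K)^{-1}\mathbf{a}\big)$ then produces a solution whose non-tangential maximal function lies in $L^2(\partial\Omega)$; a square-function/area-integral estimate identifies the associated interior integrability with membership in $\mathbf{H}^{1/2}(\Omega)$, and uniqueness (up to the additive constant for the pressure) follows from the energy identity and the jump relations.

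For the $L^p$ statement with $p \in [2,\infty]$ I would not redo the inversion but instead propagate the $L^2$ result through real-variable machinery. Invertibility of $\tfrac{1}{2}I + K$ on $L^p(\partial\Omega)$ for $p$ near $2$ follows by Calder\'on--Zygmund theory and interpolation with the $L^2$ endpoint; to reach the full range up to $p=\infty$ one invokes Shen's reverse-H\"older and good-$\lambda$ technique, or equivalently the $L^p$ resolvent estimates of Mitrea--Taylor, which transfers the boundary integrability of $\mathbf{a}$ into the interior Besov--Sobolev regularity $\mathbf{u}_s \in W^{1/p,p}(\Omega)$ (consistent with the $L^2$ case at $p=2$). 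The interior statements $\mathbf{u}_s \in W^{2,1}_{loc}(\Omega)$ and $p_s \in W^{1,1}_{loc}(\Omega)$ are then a soft consequence: on any compactly contained subdomain the layer potentials are convolutions against the smooth off-diagonal part of the Stokeslet, so $(\mathbf{u}_s,p_s)$ is in fact $C^\infty$ away from $\partial\Omega$ and the stated local Sobolev memberships hold trivially.

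The main obstacle is the $L^2$-invertibility of $\tfrac{1}{2}I+K$ on a merely Lipschitz boundary, where the compactness of $K$ that drives the classical smooth-domain Fredholm theory is no longer available. It is precisely the Rellich identity for the Stokes stress tensor, together with the careful bookkeeping of the tangential derivatives it generates, that replaces compactness and furnishes the quantitative lower bound; this technically demanding core is exactly what the cited works \cite{FKV1988, Shen1995, MiTa2001} supply, and I would invoke them for the detailed estimates rather than reconstruct the singular-integral calculus here.
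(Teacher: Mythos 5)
Your proposal is correct and follows essentially the same route as the paper: the paper offers no independent proof of this lemma, presenting it as a summary of known results with citations to \cite{FKV1988, Shen1995, MiTa2001}, and your outline (hydrodynamic layer potentials, Rellich-type identities for the $L^2$-invertibility of $\tfrac{1}{2}I+K$ in lieu of compactness, Shen's and Mitrea--Taylor's machinery for the $L^p$ range, and interior smoothness of the potentials for the $W^{2,1}_{loc}\times W^{1,1}_{loc}$ statements) is precisely the argument those cited works supply. No gaps worth flagging.
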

Here ``loc'' means any sub-domain of $\Omega$ that is of positive distance from the boundary $\partial \Omega$.
Lemma \ref{lem_stokes} guarantees the existence of the solution $u_s=w_s=0$, $p_s=const$ and $v_s$ to the Stokes system \eqref{S1}-\eqref{S4} under the boundary conditions \eqref{NS_bc} and \eqref{bc_g3}.

We now proceed as in Section 2.  Define a new velocity $(u^\prime, v^\prime, w^\prime)$ via 
\begin{align*}
u=u^\prime+u_s=u^\prime, \quad v=v^\prime+v_s, \quad w=w^\prime+w_s=w^\prime.
\end{align*}
One sees that the velocity $(u^\prime, v^\prime, w^\prime)$ satisfies the following equations and boundary conditions, dropping the primes for notational simplicity,
\begin{align}
&u \frac{\partial u}{\partial r}+ w\frac{\partial u}{\partial z}-\frac{(v+v_s)^2}{r}=-\frac{1}{\rho}\frac{\partial p}{\partial r} +\nu \Delta_{r, z} u, \label{NS_C1}\\
&u \frac{\partial v}{\partial r}+u \frac{\partial v_s}{\partial r}+ w\frac{\partial v}{\partial z}+w\frac{\partial v_s}{\partial z}+\frac{u(v+v_s)}{r}=\nu \Delta_{r, z} v , \label{NS_C2}\\
&u \frac{\partial w}{\partial r}+ w\frac{\partial w}{\partial z}=-\frac{1}{\rho}\frac{\partial p}{\partial z} +\nu (\frac{\partial ^2 w}{\partial r^2}+\frac{1}{r}\frac{\partial w}{\partial r}+\frac{\partial^2 w}{\partial z^2}), \label{NS_C3}\\
&\frac{\partial(ru) }{\partial r}+\frac{\partial (rw)}{\partial z}=0,  \label{NS_C4} \\
& (u, v, w)|_{\Gamma_i \cup \Gamma_l}=0, \quad (\frac{\partial u}{\partial z}, \frac{\partial v}{\partial z}, w)|_{\Gamma_u}=0, \quad (u, v, w)|_{\Gamma_3}=0, \label{NS_C5}
\end{align}
where we recall the operator  $\Delta_{r, z}= \frac{\partial^2}{\partial r^2}+ \frac{1}{r} \frac{\partial}{\partial r}-\frac{1}{r^2}+\frac{\partial^2}{\partial z^2}$.

\begin{rem}
We note that $u=v=w=0$ is not a solution to the system \eqref{NS_C1}-\eqref{NS_C5}. Otherwise, the system \eqref{NS_C1}-\eqref{NS_C5}  reduces to
\begin{equation}\label{HypoE}
\left\{
\begin{aligned}
&-\frac{v_s^2}{r}=-\frac{1}{\rho} \frac{\partial p}{\partial r}, \\
&0=-\frac{1}{\rho}\frac{\partial p}{\partial z}.
\end{aligned}
\right.
\end{equation}
Hence $p$ would be independent of $z$, and by the first equation in \eqref{HypoE}, $v_s$ would depend only on $r$.  The boundary condition $v_s|_{z=0}=0$ would then imply that  $v_s=0$ and this contradicts the boundary condition $v_s|_{r=\sigma}=\frac{\gamma}{2\pi \sigma}$ in \eqref{e_bc}.
\end{rem}

The weak formulation for the system  \eqref{NS_C1}-\eqref{NS_C5} can be defined analogously as in Section 2. With abuse of notation, 
we define the domain $\Omega^\prime:=\{(r,z), \sigma \leq r < R, 0\leq z \leq L\}$, and we modify the function space 
\begin{align*}
&V_1=\Big\{u \big| \int_{\Omega^\prime} [(\frac{\partial u}{\partial r})^2+(\frac{\partial u}{\partial z})^2]\, rdrdz < \infty, u|_{z=0}=u|_{r=\sigma}=u|_{r=R}=0\Big\}, \\
&W_1=\Big\{w \big| \int_{\Omega^\prime} [(\frac{\partial w}{\partial r})^2+(\frac{\partial w}{\partial z})^2]\, rdrdz < \infty, w|_{z=0}=w|_{r=\sigma}=w|_{z=L}=0\Big\}, \\
&{\bf{V}}_2=\Big\{(u, w) \in V_1 \times W_1 \big| \frac{\partial (ru)}{\partial r}+\frac{\partial (rw)}{\partial z}=0\Big\}.
\end{align*}
One can verify that the function spaces $V_1, W_1, \bf{V}_2$ are Hilbert spaces equipped with inner product, for instance,
\begin{align*}
(v, v_1)_{H_1}=\int_{\Omega^\prime} \big(\frac{\partial v}{\partial r} \frac{\partial v_1}{\partial r}+\frac{\partial v}{\partial z} \frac{\partial v_1}{\partial z}\big) \, rdrdz.
\end{align*}
The norm induced by the inner product,  which is equivalent to the $H^1$ norm ,  will be simply denoted by $|| \cdot ||$

The weak formulation for the system \eqref{NS_C1}-\eqref{NS_C5} reads as follows: find $({\bf{U}}, v):=(u, w, v) \in {\bf{V}_2} \times V_1 $ such that for any $(\tilde{{\bf{U}}}, \tilde{v}):=(u_1, w_1, v_1) \in {\bf{V}}_2 \times V_1$, there holds
\begin{align}
&\int_{\Omega^\prime}({\bf{U}} \cdot \nabla){\bf{U}} \cdot \tilde{{\bf{U}}} \, rdrdz -\int_{\Omega^\prime}\frac{(v+v_s)^2}{r} \tilde{u} \, rdrdz= -\nu \int_{\Omega^\prime} \nabla {\bf{U}}: \nabla \tilde{{\bf{U}}}\, rdrdz -\nu \int_{\Omega} \frac{u \tilde{u}}{r^2}\, rdrdz, \label{WNS1_new} \\
&\int_{\Omega^\prime} ({\bf{U}} \cdot \nabla v) \tilde{v}\, rdrdz-\int_{\Omega^\prime} v_s [u  \frac{\partial \tilde{v}}{\partial r} ( r)+w\frac{\partial \tilde{v}}{\partial z} ]\, rdrdz + \int_{\Omega^\prime} \frac{u(v+v_s)}{r} \tilde{v}\, rdrdz  \label{WNS2_new}\\
&=-\nu \int_{\Omega^\prime} \nabla v \cdot \nabla \tilde{v}\, rdrdz     -\nu  \int_{\Omega} \frac{(v+v_s) \tilde{v}}{r^2}\, rdrdz. \nonumber
\end{align}

We will use a contradiction argument to show  that the system \eqref{NS_C1}-\eqref{NS_C5} admits a weak solution in the sense of Eqs. \eqref{WNS1_new}-\eqref{WNS2_new}. Such a contradiction argument was first introduced by Leray when studying the existence of solutions to the stationary Navier-Stokes equations; it  was explored by many authors later on, cf.  \cite{Ladyzhenskaya1969, Amick1984, KPR2013, KPR2015}. As in the classical theory of Navier-Stokes equations, the Eqs. \eqref{WNS1_new}-\eqref{WNS2_new} can be written in the functional form $A({\bf{U}}, v)+ B({\bf{U}}, v)=0$ where the operator $A$ is the Stokes operator such that $A^{-1}B$ is continuous and compact, cf. \cite{Ladyzhenskaya1969, Temam1977} for details. The Leray-Schauder fixed point theorem states that if the solutions to the equation $A({\bf{U}_\lambda}, v_\lambda)+ \lambda B({\bf{U}}_\lambda, v_\lambda)=0$ are uniformly bounded in $\lambda$ for all $\lambda \in [0,1]$ such that the solutions exist, then there exists a solution to the equation $A({\bf{U}}, v)+ B({\bf{U}}, v)=0$. We arrive then at the following theorem.

\begin{thm}\label{ThmOut}
The steady axisymmetric Navier-Stokes system \eqref{NS1}--\eqref{NS4} equipped with the boundary conditions \eqref{NS_bc} and \eqref{bc_g3} admits a weak solution in the sense that $(u, v-v_s, w)$ satisfies the weak formulation Eqs. \eqref{WNS1_new}-\eqref{WNS2_new}.
\end{thm}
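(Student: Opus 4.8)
\emph{Proof strategy.} The plan is to realize the weak formulation \eqref{WNS1_new}--\eqref{WNS2_new} as an operator equation $A(\mathbf{U},v)+B(\mathbf{U},v)=0$ on the Hilbert space $H:=\mathbf{V}_2\times V_1$ and to invoke the Leray--Schauder fixed point theorem, the real work being a uniform a priori bound established by a Leray-type contradiction argument. First I would let $A$ be the bounded, coercive Stokes operator generated by the viscous bilinear form on the right of \eqref{WNS1_new}--\eqref{WNS2_new} (including the nonnegative Hardy terms $u^2/r^2$ and $v^2/r^2$); by Lax--Milgram $A$ is an isomorphism of $H$. Into $B$ I collect the poloidal convection $(\mathbf{U}\cdot\nabla)\mathbf{U}$, the centrifugal term $(v+v_s)^2/r$, the azimuthal transport $\mathbf{U}\cdot\nabla v$, and every term carrying the fixed field $v_s$. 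Using the weighted axisymmetric Sobolev embedding $V_1\hookrightarrow L^q$, its compactness into $L^2$ on the bounded $\Omega^\prime$, and the bound $v_s\in L^\infty\cap W^{1/p,p}$ from Lemma \ref{lem_stokes} (the maximum principle gives $|v_s|\le\frac{\gamma}{2\pi\sigma}$), one checks that $B$ is continuous and $A^{-1}B$ is compact, exactly as in \cite{Ladyzhenskaya1969,Temam1977}.

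Second, by Leray--Schauder it suffices to bound, uniformly in $\lambda\in[0,1]$, every solution of $A(\mathbf{U}_\lambda,v_\lambda)+\lambda B(\mathbf{U}_\lambda,v_\lambda)=0$ in $H$. Testing this equation with the solution itself, the two convection terms $\int(\mathbf{U}\cdot\nabla)\mathbf{U}\cdot\mathbf{U}$ and $\int(\mathbf{U}\cdot\nabla v)v$ vanish by the weighted divergence-free constraint and the homogeneous data \eqref{NS_C5}, and --- crucially --- the two cubic swirl contributions $-\int\frac{v^2}{r}u$ (radial equation) and $+\int\frac{v^2}{r}u$ (azimuthal equation) cancel. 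What remains is the energy identity $\nu\,\|(\mathbf{U}_\lambda,v_\lambda)\|_\ast^2=\lambda\,[\mathcal{C}_2(\mathbf{U}_\lambda,v_\lambda)+\mathcal{C}_1(\mathbf{U}_\lambda,v_\lambda)]$, where $\|\cdot\|_\ast$ is the dissipation norm, $\mathcal{C}_2(\mathbf{U},v)=\int\frac{v_s\,v\,u}{r}+\int v_s\,\mathbf{U}\cdot\nabla v$ is quadratic in the unknowns, and $\mathcal{C}_1$ gathers the terms $\int\frac{v_s^2}{r}u$ and $-\nu\int\frac{v_s v}{r^2}$ that are linear in the unknowns. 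Since $v_s$ is fixed and may be large, $\mathcal{C}_2$ is not absorbed by the dissipation, so no bound follows directly; this is exactly the point where the contradiction argument is needed.

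Third, assume no uniform bound exists: pick $\lambda_k\to\lambda_0\in[0,1]$ and solutions with $N_k:=\|(\mathbf{U}_k,v_k)\|\to\infty$, set $(\hat{\mathbf{U}}_k,\hat v_k):=(\mathbf{U}_k,v_k)/N_k$ so that $\|(\hat{\mathbf{U}}_k,\hat v_k)\|=1$, and pass to a subsequence with $(\hat{\mathbf{U}}_k,\hat v_k)\rightharpoonup(\hat{\mathbf{U}},\hat v)$ weakly in $H$ and strongly in the weighted $L^2$ by Rellich. If $\lambda_0=0$ the energy identity already forces $\nu\|(\hat{\mathbf{U}}_k,\hat v_k)\|_\ast^2\to0$, contradicting the normalization, so assume $\lambda_0>0$. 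Testing the equation against a fixed $(\mathbf{U}_1,v_1)$, dividing by $N_k^2$, and letting $k\to\infty$ (the viscous and $v_s$-terms are $O(1/N_k)$, while the quadratic terms pass to the limit by strong $L^2$ convergence after moving the derivatives onto the test functions) shows that $(\hat{\mathbf{U}},\hat v)$ solves the homogeneous axisymmetric Euler system weakly: $(\hat{\mathbf{U}}\cdot\nabla)\hat{\mathbf{U}}-\frac{\hat v^2}{r}\mathbf{e}_r=-\nabla\hat p$ and $(\hat{\mathbf{U}}\cdot\nabla)\hat v+\frac{\hat u\hat v}{r}=0$, with $\hat{\mathbf{U}}$ weighted-divergence-free and subject to all the homogeneous boundary conditions in \eqref{NS_C5}.

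The hard part, which I expect to be the main obstacle, is to prove that this limiting system forces the poloidal field $\hat{\mathbf{U}}$ to vanish; the swirl term $\hat v^2/r$ enters as a centrifugal forcing that is genuinely new relative to the classical Leray situation. Since the meridian domain $\Omega^\prime$ has connected boundary, one first shows from $(\hat{\mathbf{U}}\cdot\nabla)\hat{\mathbf{U}}+\nabla\hat p=\frac{\hat v^2}{r}\mathbf{e}_r$ together with $\hat{\mathbf{U}}\cdot\mathbf{n}=0$ on $\partial\Omega^\prime$ that a Bernoulli-type pressure is constant along $\partial\Omega^\prime$, and then rules out a nontrivial $\hat{\mathbf{U}}$ by the Korobkov--Pileckas--Russo method, cf.\ \cite{Amick1984,KPR2013,KPR2015}. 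Once $\hat{\mathbf{U}}=0$, both terms of $\mathcal{C}_2$ carry a factor of $\hat{\mathbf{U}}$, so $\mathcal{C}_2(\hat{\mathbf{U}},\hat v)=0$; passing the energy identity to the limit (here $\hat u_k\hat v_k\to0$ in $L^1$ and $v_s\hat{\mathbf{U}}_k\to0$ strongly in $L^2$ against the weakly convergent $\nabla\hat v_k$) gives $\nu\,\lim_k\|(\hat{\mathbf{U}}_k,\hat v_k)\|_\ast^2=\lambda_0\,\mathcal{C}_2(\hat{\mathbf{U}},\hat v)=0$, so $(\hat{\mathbf{U}}_k,\hat v_k)\to0$ strongly in $H$, contradicting $\|(\hat{\mathbf{U}}_k,\hat v_k)\|=1$. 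This furnishes the uniform bound, and Leray--Schauder then yields the weak solution asserted in the theorem.
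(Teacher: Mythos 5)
Your scaffolding coincides with the paper's: the reformulation $A(\mathbf{U},v)+B(\mathbf{U},v)=0$ with $A^{-1}B$ compact, the Leray--Schauder alternative, the contradiction hypothesis with $J_k=\|(\mathbf{U}_k,v_k)\|\to\infty$, the normalization, the energy identity after the cubic-swirl cancellation (your $\mathcal{C}_2/\mathcal{C}_1$ split is exactly \eqref{Energy_con}), the weak/strong compactness of the normalized sequence, the exclusion of $\lambda_0=0$ (this is the content of \eqref{Energy_con_new}), and the passage to a limiting inviscid azimuthal equation. Up to that point you have reproduced the paper's proof.

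The genuine gap is in your closing step, precisely where you flagged ``the main obstacle.'' You propose to show that the limiting poloidal field $\hat{\mathbf{U}}$ vanishes, using only the limiting Euler system together with the homogeneous boundary conditions \eqref{NS_C5}, via boundary constancy of a Bernoulli function and the Korobkov--Pileckas--Russo method. This cannot succeed as stated: the stationary axisymmetric Euler system with homogeneous boundary data admits nontrivial solutions supported in the interior of $\Omega^\prime$ (e.g.\ smooth compactly supported stationary axisymmetric Euler flows of Gavrilov type, or Grad--Shafranov/Hicks-type solutions whose stream function is compactly supported in the meridional rectangle), and these satisfy every condition in \eqref{NS_C5}; hence no argument that sees only the limit system and its boundary values can conclude $\hat{\mathbf{U}}=0$. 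You have also mischaracterized the KPR scheme: in their work the Euler limit is \emph{not} shown to vanish; the contradiction is extracted from the analogue of the identity $\nu=\lambda_0(\cdots)$ by proving its right-hand side is zero, using Bernoulli's law plus delicate level-set/Morse--Sard analysis of the head pressure --- machinery that here would additionally have to cope with the free-slip condition on $\Gamma_u$, the corner Lipschitz geometry, and the recovery of a pressure for the normalized sequence (which your sketch needs even to write the limiting momentum equation, and which the paper never performs).

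What the paper does instead is much lighter, and it is the idea your proposal is missing: only the azimuthal limit equation \eqref{limitV} is used --- no pressure, no momentum limit, no Bernoulli analysis. The right-hand side of the limiting energy identity \eqref{Energy_con_new} is \emph{exactly} the left-hand side of \eqref{limitV} evaluated at the test function $\tilde{v}=v_s$; since $v_s\notin V_1$ (its trace on $\Gamma_l$ is $\frac{\gamma}{2\pi\sigma}$), one approximates $v_s\in H^{\frac12}$ by elements of $V_1$ and passes to the limit to obtain \eqref{limitV2}. This forces the right-hand side of \eqref{Energy_con_new} to vanish while its left-hand side is at least $\nu>0$ --- a contradiction that holds irrespective of whether $\hat{\mathbf{U}}$ vanishes. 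So the repair to your argument is not to prove $\hat{\mathbf{U}}=0$ (false in general, and unnecessary), but to choose the background field $v_s$ itself as the test function in the limiting azimuthal equation; with that substitution your remaining steps (including the final strong-convergence contradiction with the normalization) become superfluous, and the rest of your write-up matches the published proof.
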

\begin{proof}
We argue by contradiction. Suppose that there are no $({\bf{U}}, v):=(u, w, v) \in {\bf{V}_2} \times V_1 $ such that the Eqs. \eqref{WNS1_new}-\eqref{WNS2_new} are satisfied. The Leray-Schauder fixed point theorem implies that there exist sequences $\{\lambda_k \}_{k=1}^{\infty} \subset [0, 1]$ and $({\bf{U}}_k, v_k)\in {\bf{V}_2} \times V_1 $ such that $\forall (\tilde{{\bf{U}}}, \tilde{v}) \in {\bf{V}}_2 \times V_1$ there hold
\begin{align}
&\lambda_k \int_{\Omega^\prime}({\bf{U}_k} \cdot \nabla){\bf{U}_k} \cdot \tilde{{\bf{U}}} \, rdrdz - \lambda_k \int_{\Omega^\prime}\frac{(v_k+v_s)^2}{r} \tilde{u} \, rdrdz= -\nu \int_{\Omega^\prime} \nabla {\bf{U}_k}: \nabla \tilde{{\bf{U}}}\, rdrdz  \label{WNS1_k} \\
&-\nu \int_{\Omega} \frac{u_k \tilde{u}}{r^2}\, rdrdz, \nonumber \\
&\lambda_k \int_{\Omega^\prime} ({\bf{U}_k} \cdot \nabla v_k) \tilde{v}\, rdrdz-\lambda_k \int_{\Omega^\prime} v_s [u_k  \frac{\partial \tilde{v}}{\partial r} +w_k \frac{\partial  \tilde{v}}{\partial z} ]\, rdrdz + \lambda_k \int_{\Omega^\prime} \frac{u_k(v_k+v_s)}{r} \tilde{v}\, rdrdz  \label{WNS2_k}\\
&=-\nu \int_{\Omega^\prime} \nabla v_k \cdot \nabla \tilde{v}\, rdrdz     -\nu  \int_{\Omega} \frac{(v_k+v_s) \tilde{v}}{r^2}\, rdrdz. \nonumber\\
&\lim_{k\rightarrow \infty} \lambda_k =\lambda_0 \in [0,1], \quad \lim_{k\rightarrow \infty} J_k=\lim_{k\rightarrow \infty} ||({\bf{U}_k}, v_k)||= \infty. \label{WNS3_k}
\end{align}
It is clear that for $\lambda_k$ small enough the solutions to Eqs. \eqref{WNS1_k}--\eqref{WNS2_k} exist. We first show that there is a subsequence of $\{\lambda_k \}_{k=1}^{\infty}$ whose limit is strictly  positive.

Define $\overline{\bf{U}}_k=J_k^{-1}{\bf{U}_k}, \overline{v}_k=J_k^{-1} v_k$. It follows that $||(\overline{\bf{U}}_k, \overline{v}_k) ||=1$. We now take the test functions $\tilde{{\bf{U}}}=J_k^{-2} {\bf{U}_k}$ in Eq. \eqref{WNS1_k} and $\tilde{v}= J_k^{-2} v_k$ in Eq. \eqref{WNS2_k}. Summing up the resulting equations gives
\begin{align}\label{Energy_con}
\nu +\nu \int_{\Omega^\prime} \frac{\overline{u}_k^2+\overline{v}_k^2}{r^2}\, rdrdz=\lambda_k \int_{\Omega^\prime} v_s \big(\frac{\partial \overline{v}_k}{\partial r} \overline{u}_k+\frac{\partial \overline{v}_k}{\partial z} \overline{w}_k + \frac{\overline{u}_k \overline{v}_k}{r}\big)\, rdrdz + \lambda_k J_k^{-1} \int_{\Omega^\prime} \frac{v_s^2 \overline{u}_k}{r}\, rdrdz.
\end{align}

Since $||(\overline{\bf{U}}_k, \overline{v}_k) ||=1$, there exists a subsequence, still denoted by $(\overline{\bf{U}}_k, \overline{v}_k)$, such that $(\overline{\bf{U}}_k, \overline{v}_k) \rightarrow ({\bf{U}}_\ast, {v}_\ast)$ weakly in ${\bf{V}_2} \times V_1$ for some $({\bf{U}}^\ast, {v}_\ast) \in {\bf{V}_2} \times V_1$. The Sobolev embedding implies that $(\overline{\bf{U}}_k, \overline{v}_k) \rightarrow ({\bf{U}}_\ast, {v}_\ast)$ strongly in ${\bf{L}^r} (\Omega^\prime)\times L^r(\Omega^\prime)$ for $r \in[1, \infty)$. These compactness results enable one to pass to the limit $k \rightarrow \infty$ in the identity \eqref{Energy_con}. One finds
\begin{align}\label{Energy_con_new}
\nu +\nu \int_{\Omega^\prime} \frac{{u}_\ast^2+{v}_\ast^2}{r^2}\, rdrdz=\lambda_0 \int_{\Omega^\prime} v_s \big(\frac{\partial {v}_\ast}{\partial r} {u}_\ast+\frac{\partial {v}_\ast}{\partial z} {w}_\ast + \frac{{u}_\ast {v}_\ast}{r}\big)\, rdrdz \, rdrdz.
\end{align}
In particular, Eq. \eqref{Energy_con_new} implies that $\lambda_0>0$. Hence one can assume that the sequence $\{\lambda_k \}_{k=1}^{\infty}$ is strictly positive.

Now we introduce the rescaled viscosity $\nu_k= (\lambda_k J_k)^{-1} \nu$,  and recall that $\overline{u}_k=J_k^{-1}u_k, \overline{v}_k=J_k^{-1}v_k, \overline{w}_k=J_k^{-1}w_k$. With these new variables, Eq. \eqref{WNS2_k} can be written as
\begin{align}
&\int_{\Omega^\prime} (\overline{\bf{U}}_k \cdot \nabla \overline{v}_k) \tilde{v}\, rdrdz-J_k^{-1} \int_{\Omega^\prime} v_s [\overline{u}_k  \frac{\partial \tilde{v}}{\partial r} +\overline{w}_k \frac{\partial  \tilde{v}}{\partial z} ]\, rdrdz +  \int_{\Omega^\prime} \frac{\overline{u}_k(\overline{v}_k+J_k^{-1}v_s)}{r} \tilde{v}\, rdrdz  \label{WNS2_k_new}\\
&=-\nu_k \int_{\Omega^\prime} \nabla \overline{v}_k \cdot \nabla \tilde{v}\, rdrdz     -\nu_k  \int_{\Omega} \frac{(\overline{v}_k+J_k^{-1}v_s) \tilde{v}}{r^2}\, rdrdz. \nonumber 
\end{align}
Letting $k \rightarrow \infty$ in Eq. \eqref{WNS2_k_new}, noting that $J_k \rightarrow \infty$ and $\nu_k \rightarrow 0$, one obtains that 
\begin{align}\label{limitV}
&\int_{\Omega^\prime} ({\bf{U}}_\ast \cdot \nabla {v}_\ast) \tilde{v}\, rdrdz+\int_{\Omega^\prime} \frac{{u}_\ast {v}_\ast}{r} \tilde{v}\, rdrdz=0, \forall \tilde{v} \in V_1.
\end{align}
If one could take the test function $\tilde{v}=v_s$ in Eq. \eqref{limitV}, one would find that  
\begin{align}\label{limitV2}
&\int_{\Omega^\prime} ({\bf{U}}_\ast \cdot \nabla {v}_\ast) {v}_s\, rdrdz+\int_{\Omega^\prime} \frac{{u}_\ast {v}_\ast}{r} {v}_s\, rdrdz=0.
\end{align}
This is a contradiction with the identity  
\eqref{Energy_con_new} as the left-hand side of Eq. \eqref{Energy_con_new}  is positive. Now one  approximates $v_s \in H^{\frac{1}{2}}$ by a sequence of functions in $V^1$ and take the approximate functions as the test function in Eq. \eqref{limitV}. The procedure of taking the limit in \eqref{limitV} then shows that Eq. \eqref{limitV2} is valid.  Hence one obtains a contradiction. This shows that the system \eqref{NS_C1}-\eqref{NS_C5} admits a weak solution in the sense of Eqs. \eqref{WNS1_new}-\eqref{WNS2_new}. 
\end{proof}

\begin{rem}
Formally setting $\nu=0$ in the system \eqref{NS1}--\eqref{NS4}, one obtains the inviscid axisymmetric Euler equations, that is,  Eqs. \eqref{NS1}--\eqref{NS4} with $\nu=0$. On the boundary, one imposes that the flow is parallel to the boundary
\begin{align*}
& \text{on } \Gamma_i: w^0=0; \text{on } \Gamma_l: u^0=0; \text{on } \Gamma_u: w^0=0.
\end{align*} 
It is clear that the solutions to the stationary axisymmetric Euler equations are not unique. In particular, any solid body rotation about the cylinder would be a solution. Hence we don't know the limit of the solutions to Navier-Stokes system  \eqref{NS1}--\eqref{NS4}
when $\nu \rightarrow 0$.

Taking the inviscid flow as $(u^0, v^0, w^0)= (0, \frac{\gamma}{2\pi r}, 0)$, we see that the discrepancy  between the boundary values of the inviscid flow and those of the viscous flow is only on the azimuthal component $v$ of the velocity and is restricted on the plane $z=0, r\geq \sigma$.
A formal magnitude argument reveals that the boundary layer thickness is proportional to $\nu^{\frac{1}{2}}$.
Denote the boundary layer correctors in this region by $(\phi_u, \phi_v, \phi_w)$.  With the profile $(u^0, v^0, w^0)= (0, \frac{\gamma}{2\pi r}, 0)$, one can derive the Prandtl boundary layer equations 
\begin{align}
&\phi_u \frac{\partial \phi_u}{\partial r} +\phi_w \frac{\partial \phi_u}{\partial z}- \frac{2v^0 \phi_v +\phi_v^2}{r}=\nu \frac{\partial^2 \phi_u}{\partial z^2}, \label{Prandtl1} \\
&\phi_u \frac{\partial \phi_v}{\partial r} +\phi_w \frac{\partial \phi_v}{\partial z} + \phi_u \frac{\partial v^0}{\partial r}+ \frac{ \phi_u v^0+\phi_u \phi_v}{r}=\nu \frac{\partial^2 \phi_v}{\partial z^2}, \label{Prandtl2} \\
&\frac{\partial(r\phi_u) }{\partial r}+\frac{\partial (r\phi_w)}{\partial z}=0,  \label{Prandtl3} 
\end{align}
together with the boundary conditions
\begin{align}
&(\phi_u, \phi_v, \phi_w)|_{\Gamma_i}= (0, -\frac{\gamma}{2\pi r}, 0),  \label{Prandtl4} \\
&(\phi_u, \phi_v, \phi_w)|_{\Gamma_l}= (0, 0, 0),  \label{Prandtl5} \\
&(\frac{\partial \phi_u}{\partial z}, \frac{\partial \phi_v}{\partial z}, \phi_w)|_{\Gamma_u}= (0, 0, 0).  \label{Prandtl6}
\end{align}
We see that the boundary conditions for $\phi_v$ are mismatched at the corner $r=\sigma, z=0$. Hence one would expect some sort of corner singularity for the boundary layer $\phi_u$. This is confirmed by our numerical simulations in Sec. \ref{Nure}. We want to study in a future work the nature of the corner boundary layer in the spirit of \cite{ShKe1987,  HJT2016, HHJT2016}. Of course the issue of the existence of solutions to Eqs. \eqref{Prandtl1}-\eqref{Prandtl6} relates to the longstanding problem of existence of solutions for the Prandtl boundary layer equation \cite{OlSa1997, AWXY2015, GeDo2010, SaCa1998, XiZh2004, EEn1997}, which is compounded in the present case, by the fact that the stationary inviscid solution is not unique.
\end{rem}

\section{The numerical results}\label{Nure}
In this section, we describe some numerical results for the system \eqref{NS1}-\eqref{NS4} under the boundary conditions \eqref{NS_bc}. We impose an artificial boundary conditions at $r=R$ for a large constant $R$:
\begin{align*}
(u, v, w)|_{r=R}=(0, 0, 0).
\end{align*}
We solve the nonlinear system through a Newton's iteration procedure, starting with an initial guess given by the solution of large time to the corresponding evolutionary axisymmetric Navier-Stokes equations. 

In the first set of numerical results, we fix the (rotational) Reynolds number $Re=\frac{\gamma}{\nu}=10\pi$ and vary the radius of the cylinder $\sigma$.  The  contour plots of the velocity components $u, v, w$ of different radii $\sigma=0.1, 0.05, 0.02$ are shown in Fig. \ref{FirstFig}, \ref{SecFig} and \ref{ThrFig},  respectively. These plots exhibit a single  swirling vortex near the rotating cylinder. From the contours of the azimuthal velocity in Fig. \ref{ecFig}, one observes that the contours are nearly straight away from the corner whose magnitudes are roughly inversely proportional to   the distance to the cylinder, and near the corner the contours show singular (self-similar) structures. The axial velocity in FIg. \ref{FirstFig}  reveals the updraft in the swirling vortex. And the radial velocity (Fig. \ref{ThrFig}) shows that the flow converges toward the cylinder and then flow out. As one shrinks the the radius of the rotating cylinder, the flow pattern looks more like the one induced by potential vortex.
\begin{figure}[h!]
\centering
\begin{tabular}{cc}
   \includegraphics[width=0.4\linewidth]{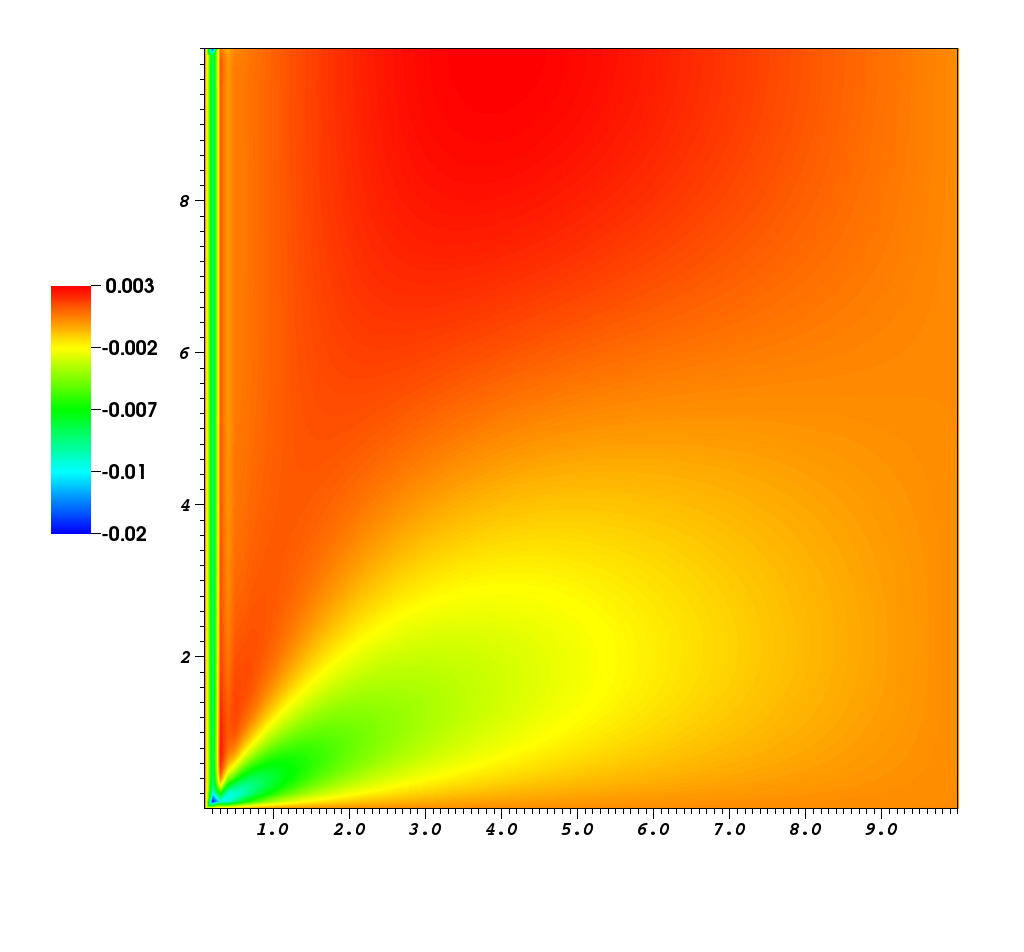}& \includegraphics[width=0.4\linewidth]{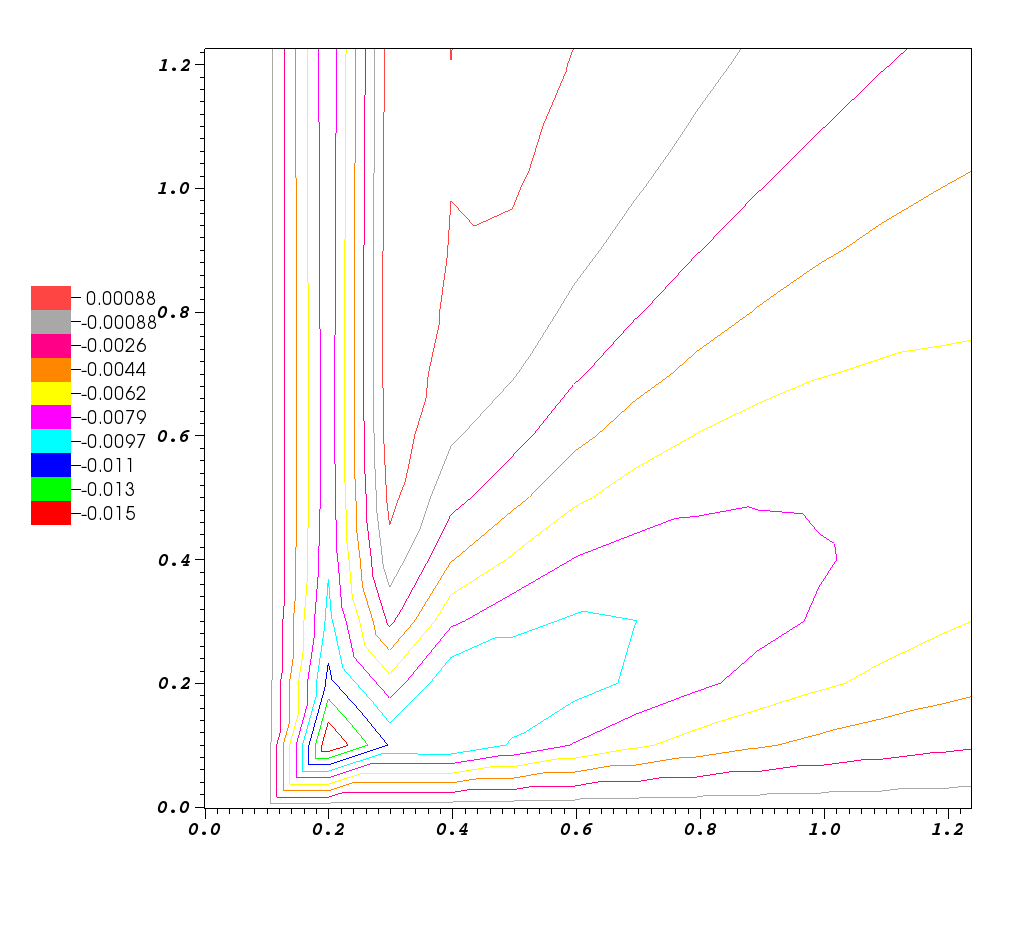}\\
   \includegraphics[width=0.4\linewidth]{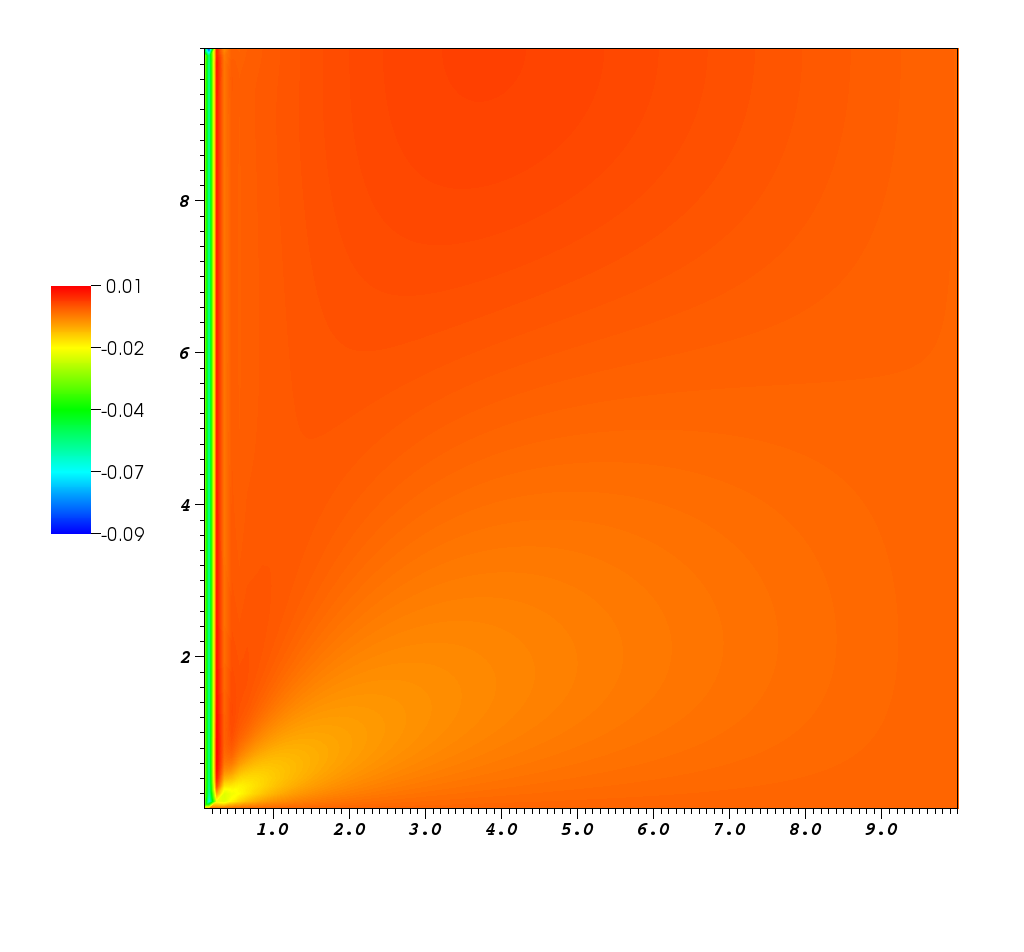}& \includegraphics[width=0.4\linewidth]{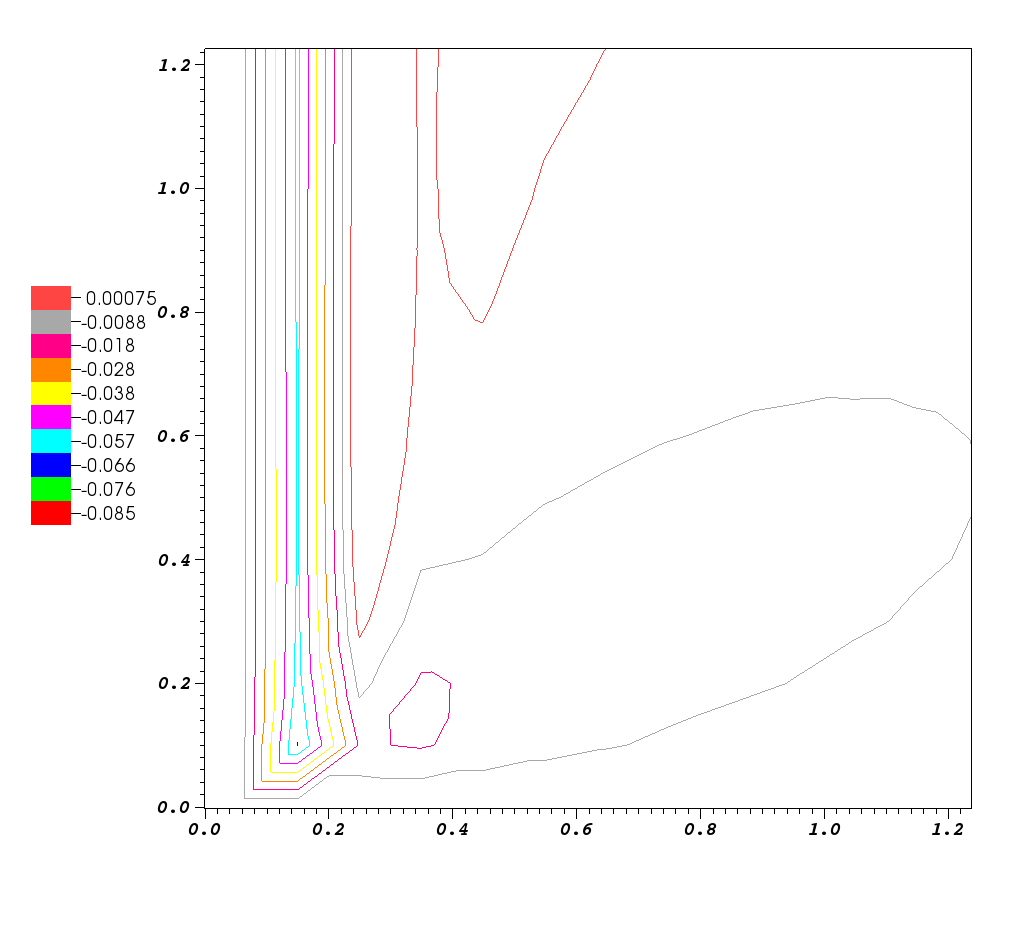}\\
  \includegraphics[width=0.4\linewidth]{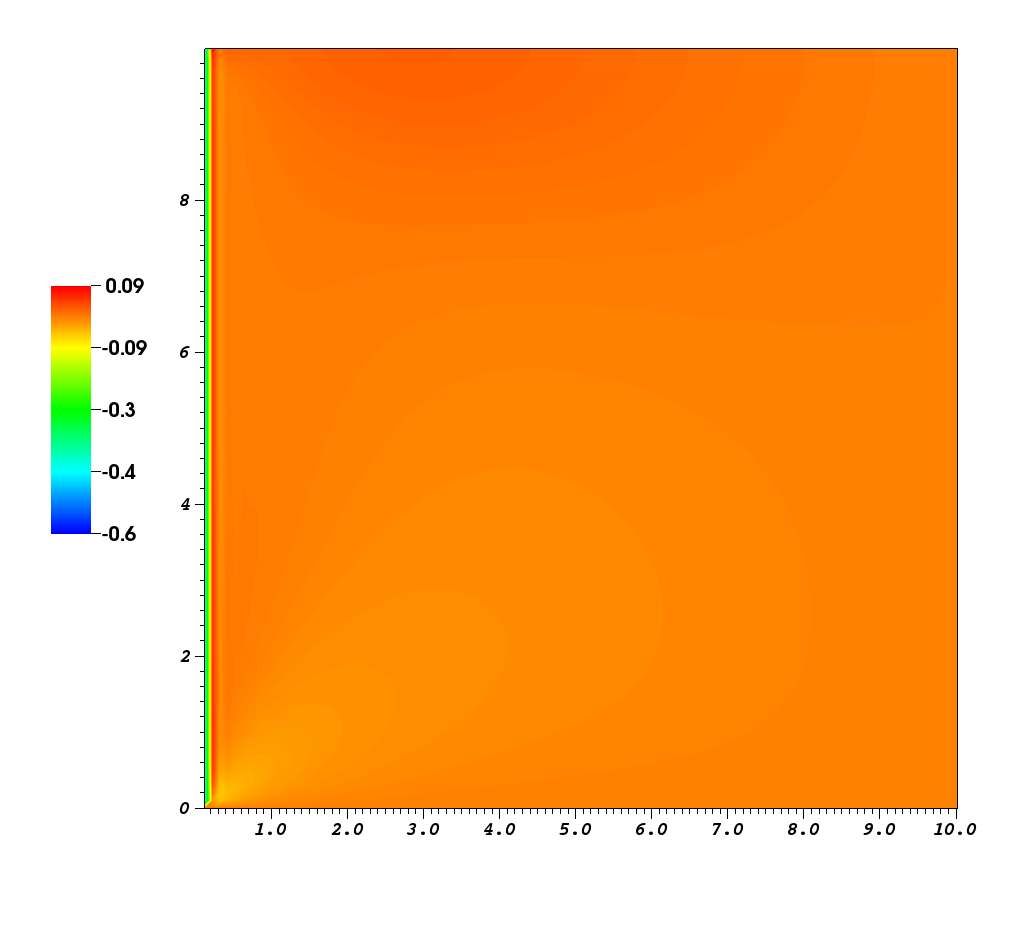}& \includegraphics[width=0.4\linewidth]{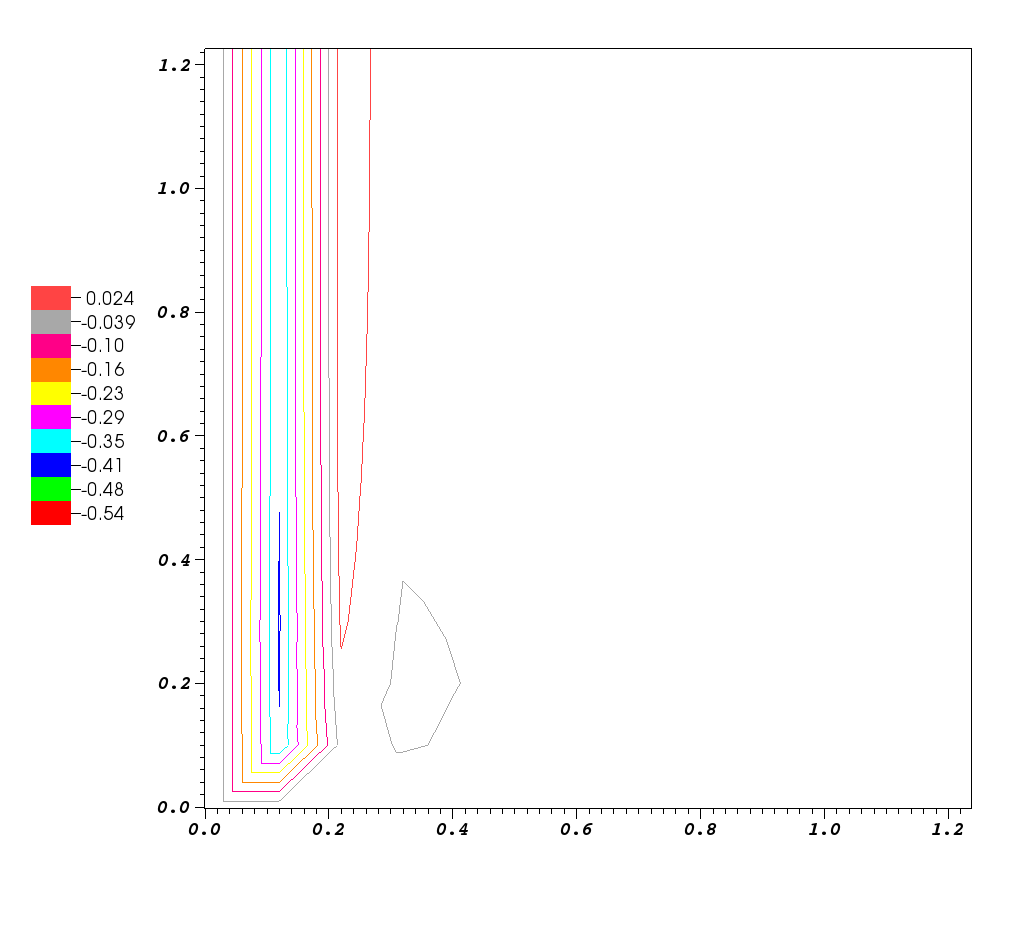}
\end{tabular}
\caption{Contour plots of the velocity component $u$ with different radii $\sigma$. From top to bottom, $\sigma=0.1, 0.05, 0.02$, respectively.  Left column, filled contour plots; right column, contour plots. The parameters are $\nu=0.01$,  $\gamma=0.1 \pi$, $Re=\frac{\gamma}{\nu}=10 \pi$.}
\label{FirstFig}
\end{figure}

\begin{figure}[h!]
\centering
\begin{tabular}{cc}
\includegraphics[width=0.4\linewidth]{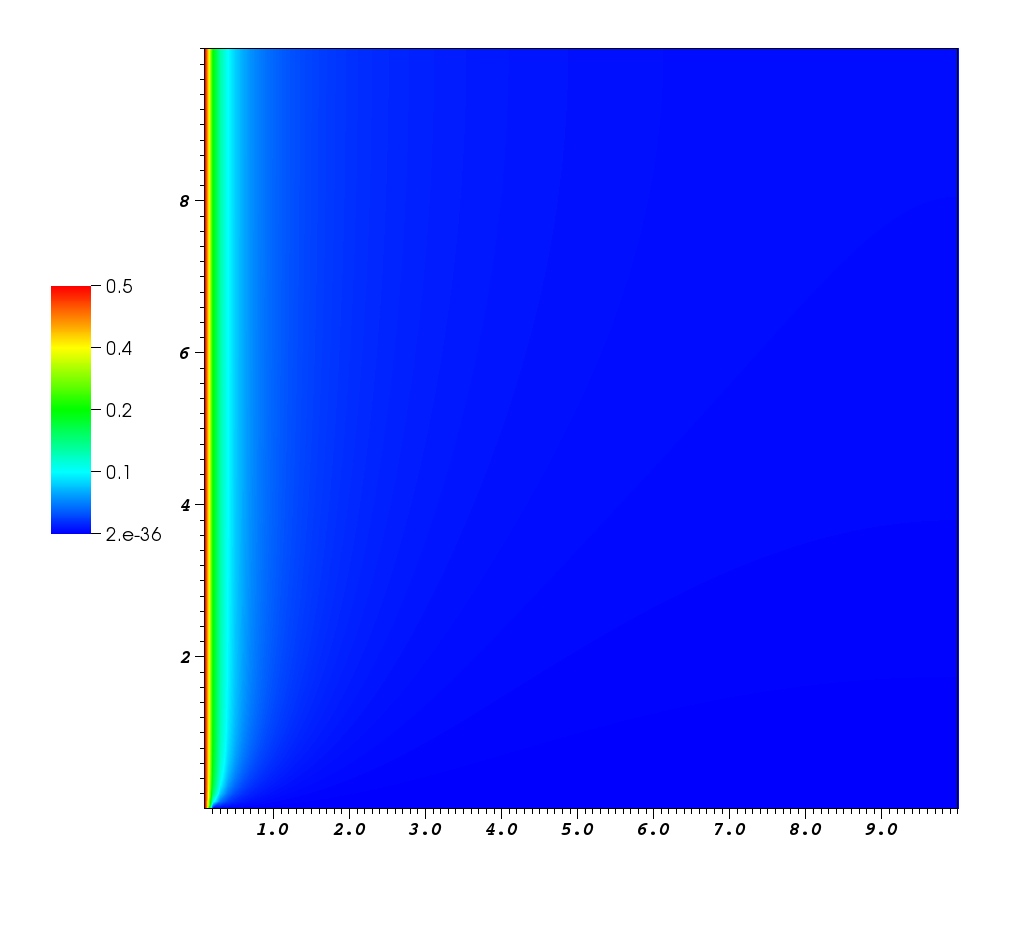}& \includegraphics[width=0.4\linewidth]{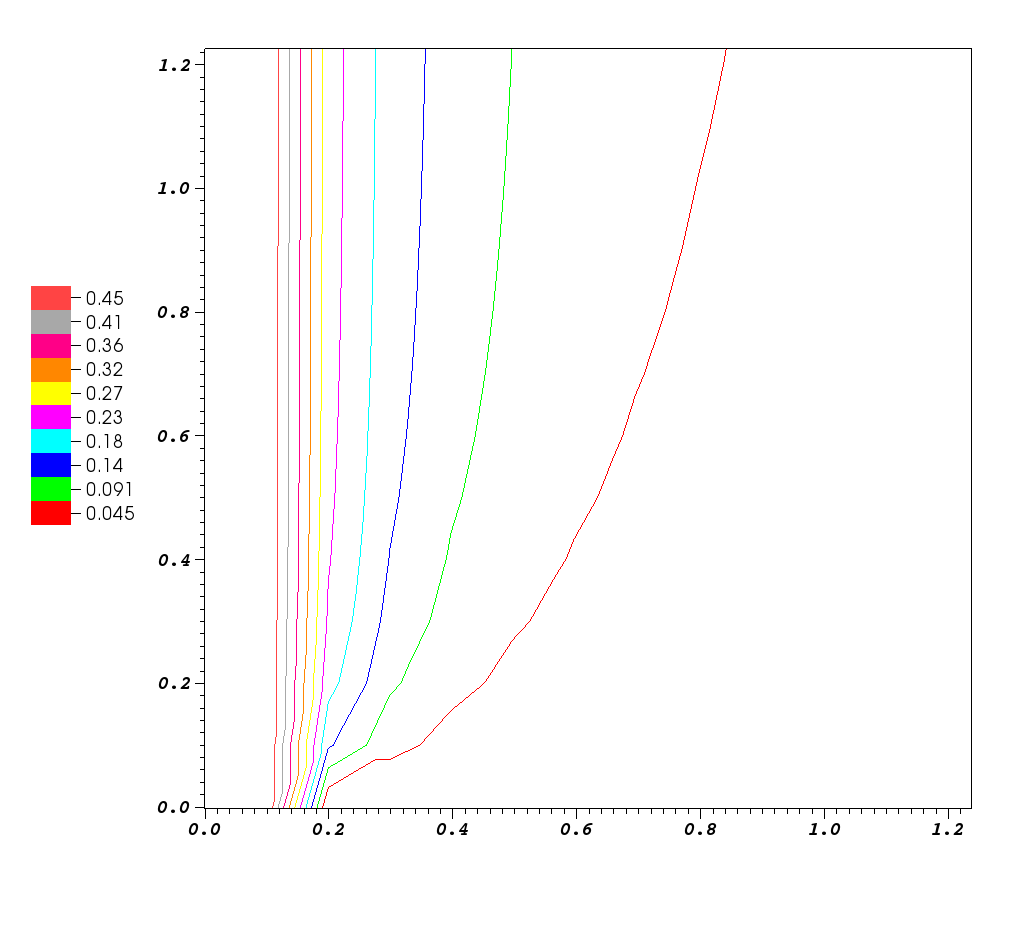}\\
   
   \includegraphics[width=0.4\linewidth]{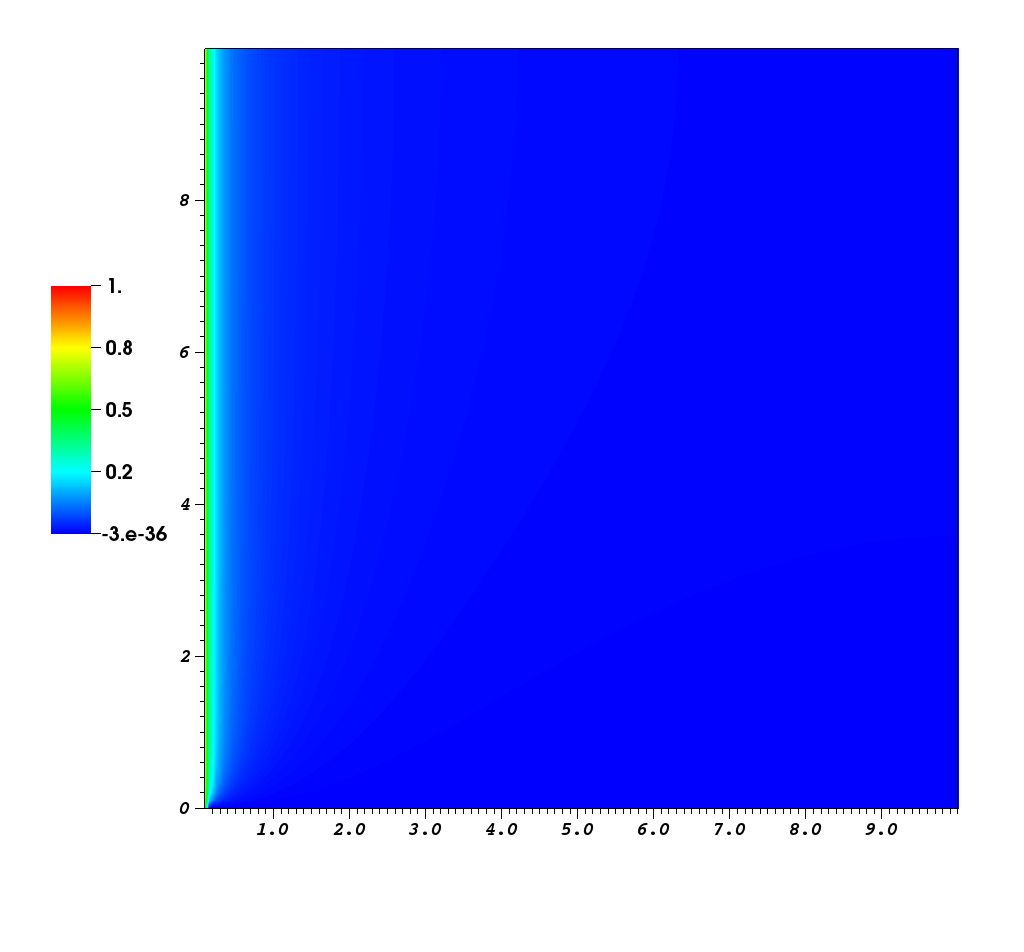}& \includegraphics[width=0.4\linewidth]{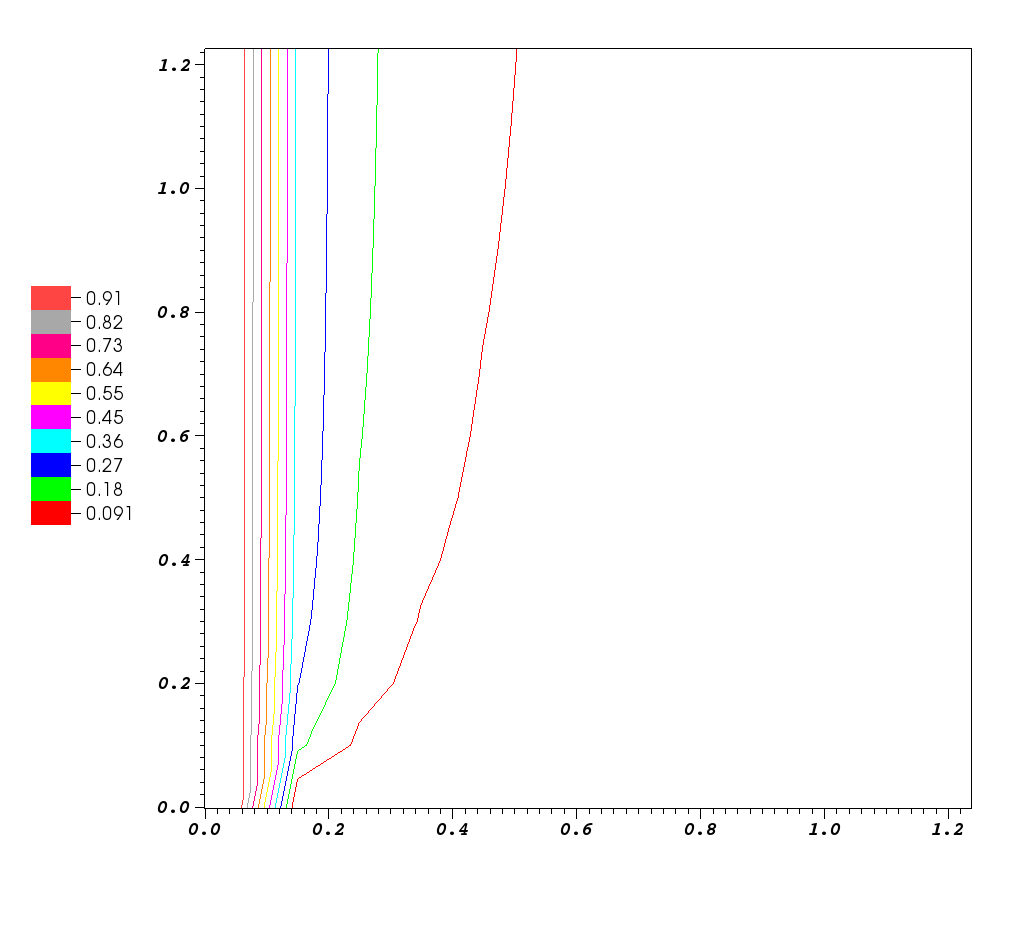}\\
    \includegraphics[width=0.4\linewidth]{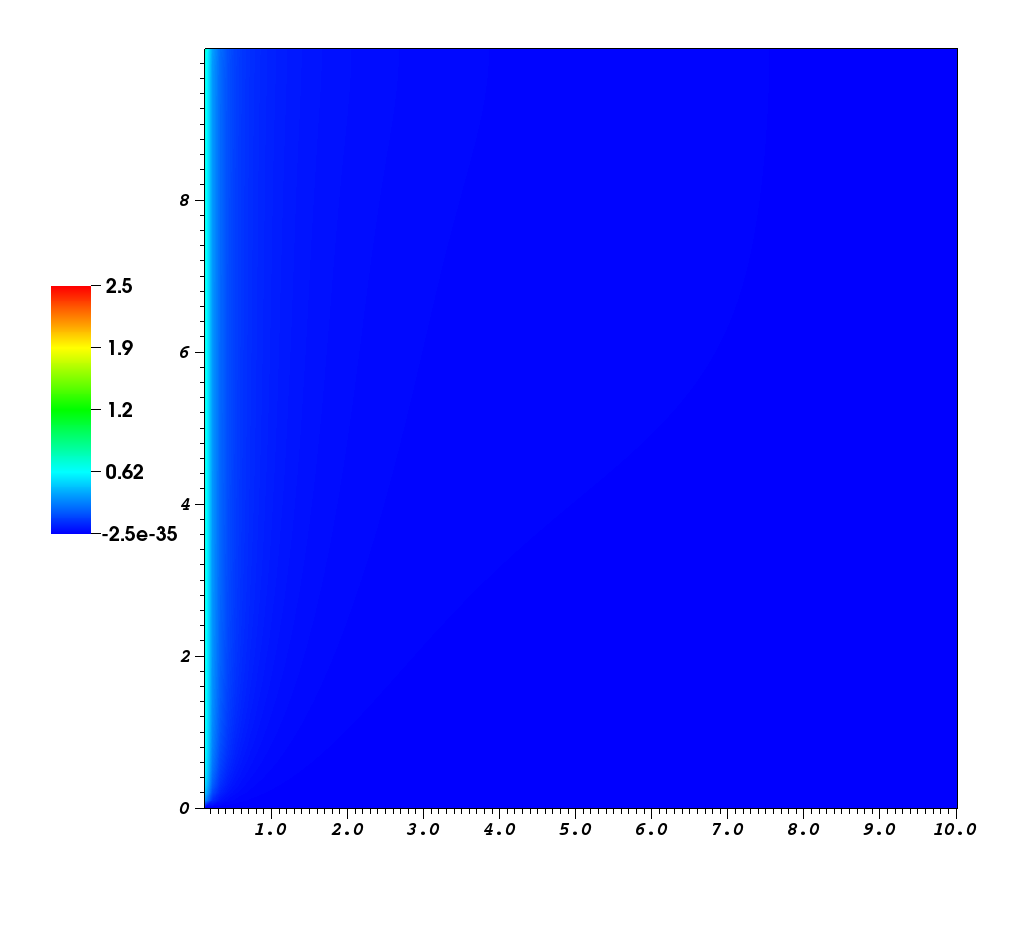}& \includegraphics[width=0.4\linewidth]{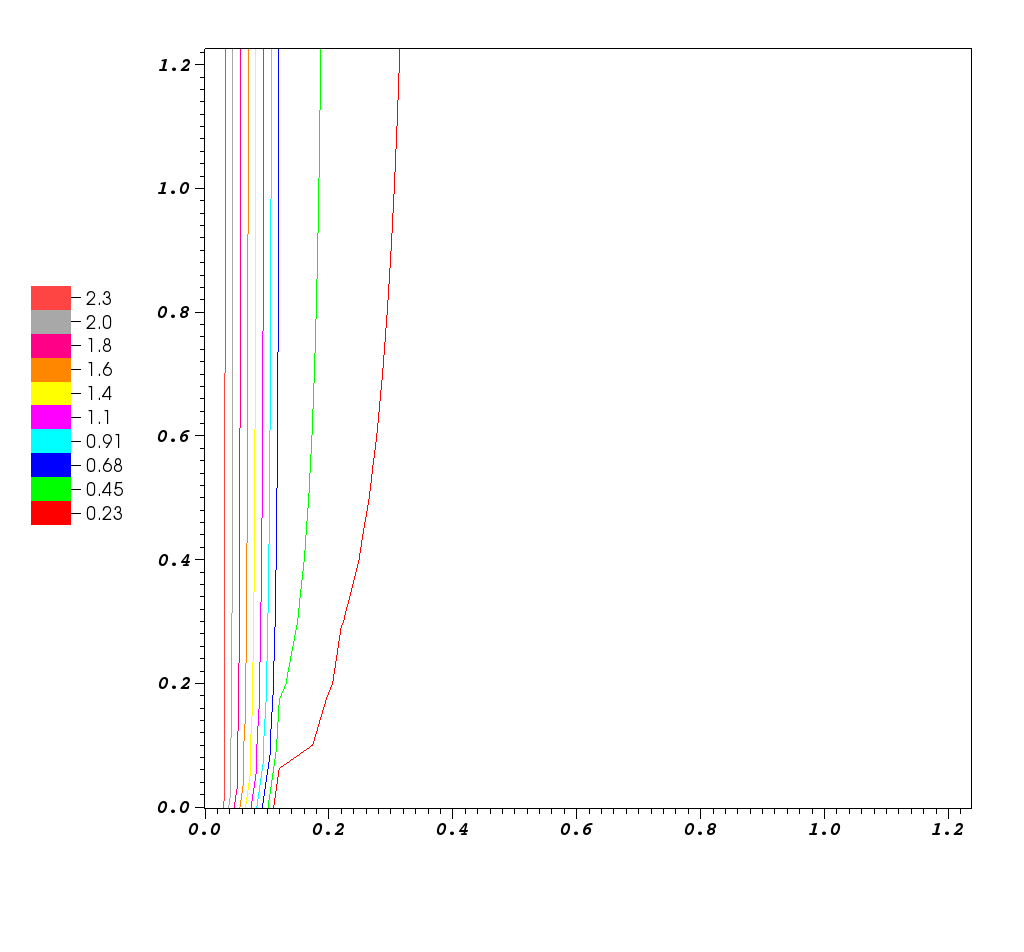} 
\end{tabular}
\caption{Contour plots of the velocity component $v$ with different radii $\sigma$. From top to bottom, $\sigma=0.1, 0.05, 0.02$, respectively.  Left column, filled contour plots; right column, contour plots. The parameters are $\nu=0.01$,  $\gamma=0.1 \pi$, $Re=\frac{\gamma}{\nu}=10 \pi$.}
\label{SecFig}
\end{figure}

\begin{figure}[h!]
\centering
\begin{tabular}{cc}
\includegraphics[width=0.4\linewidth]{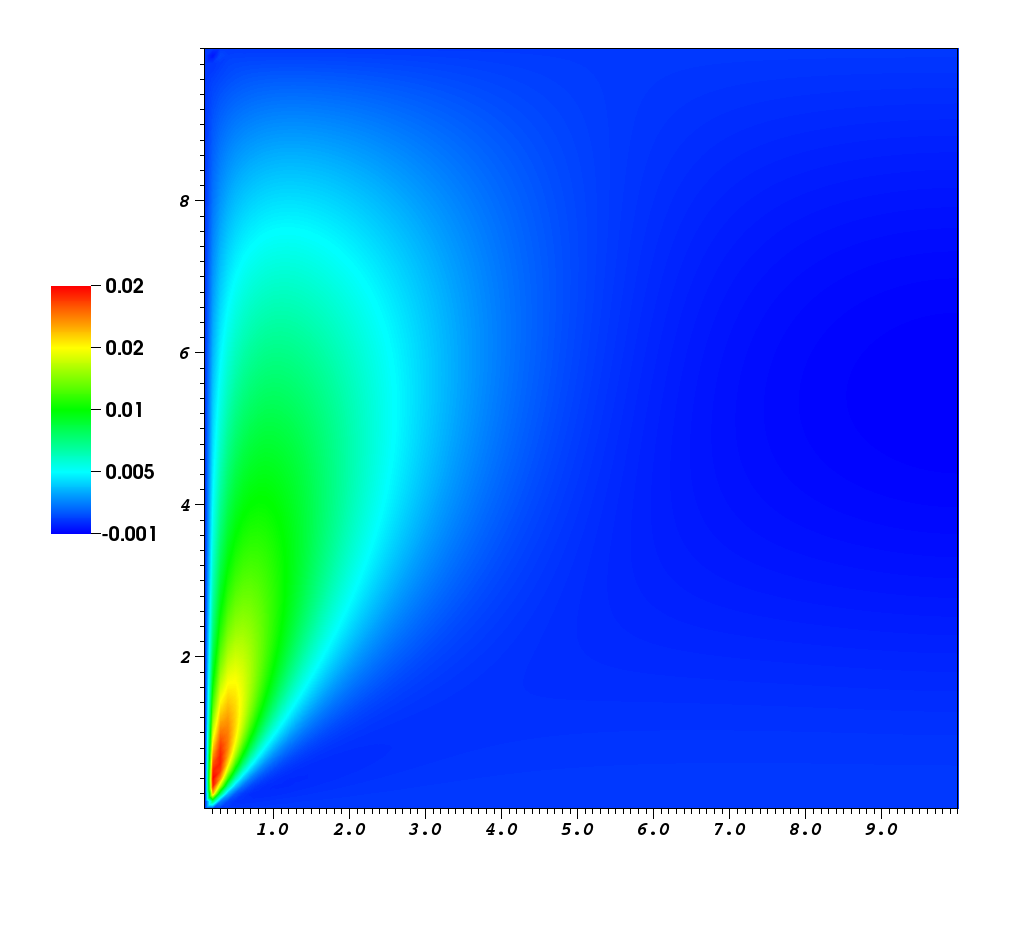}& \includegraphics[width=0.4\linewidth]{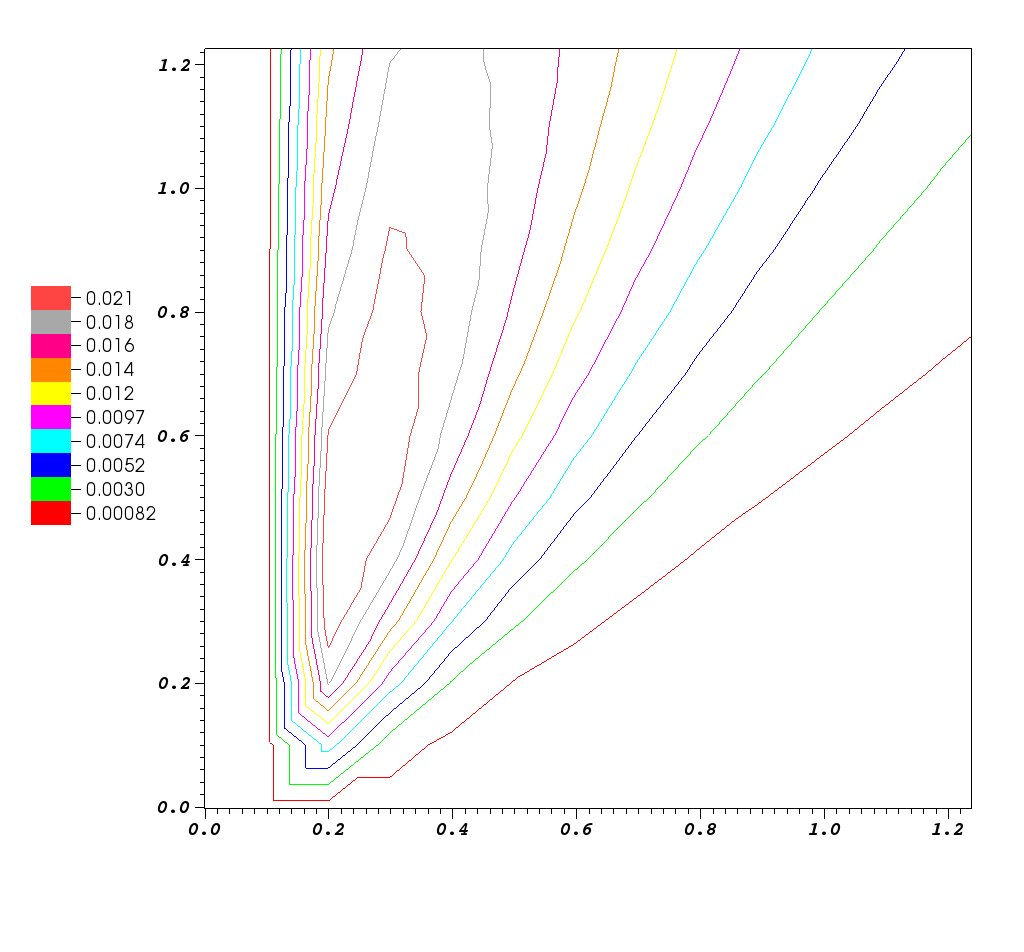}    \\  
  \includegraphics[width=0.4\linewidth]{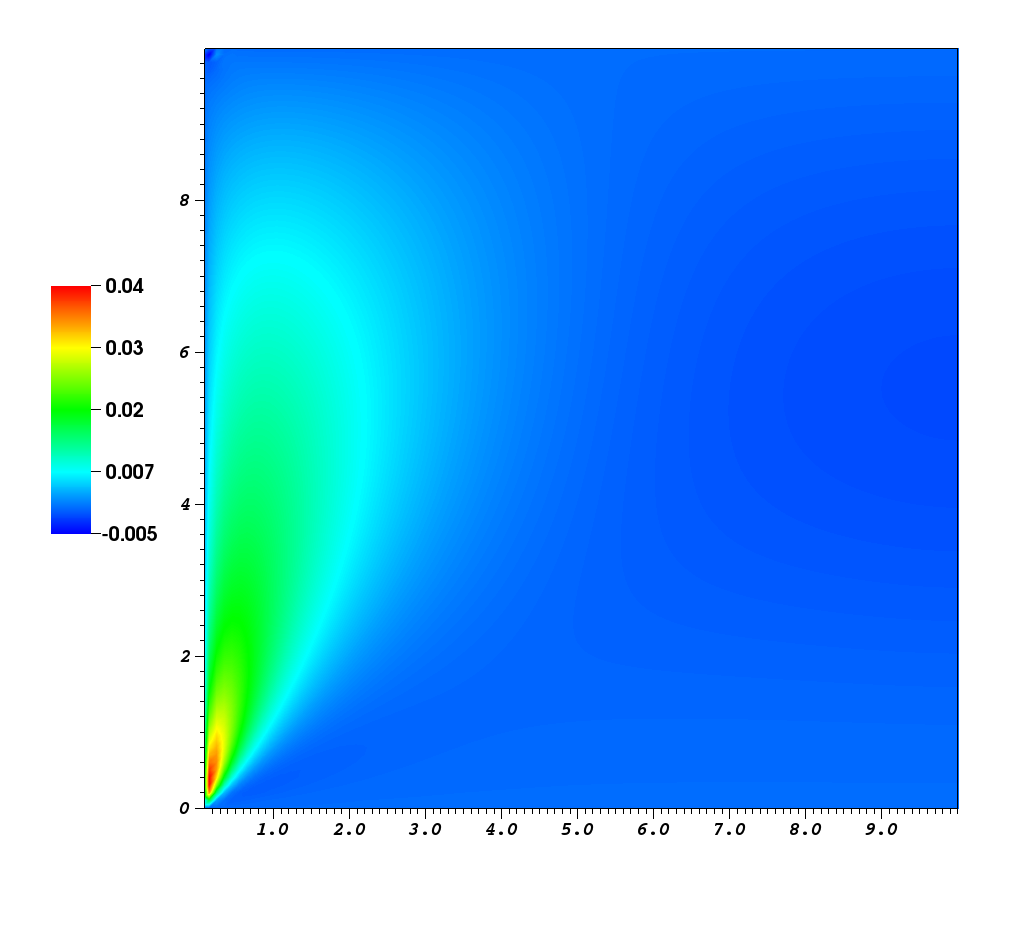}& \includegraphics[width=0.4\linewidth]{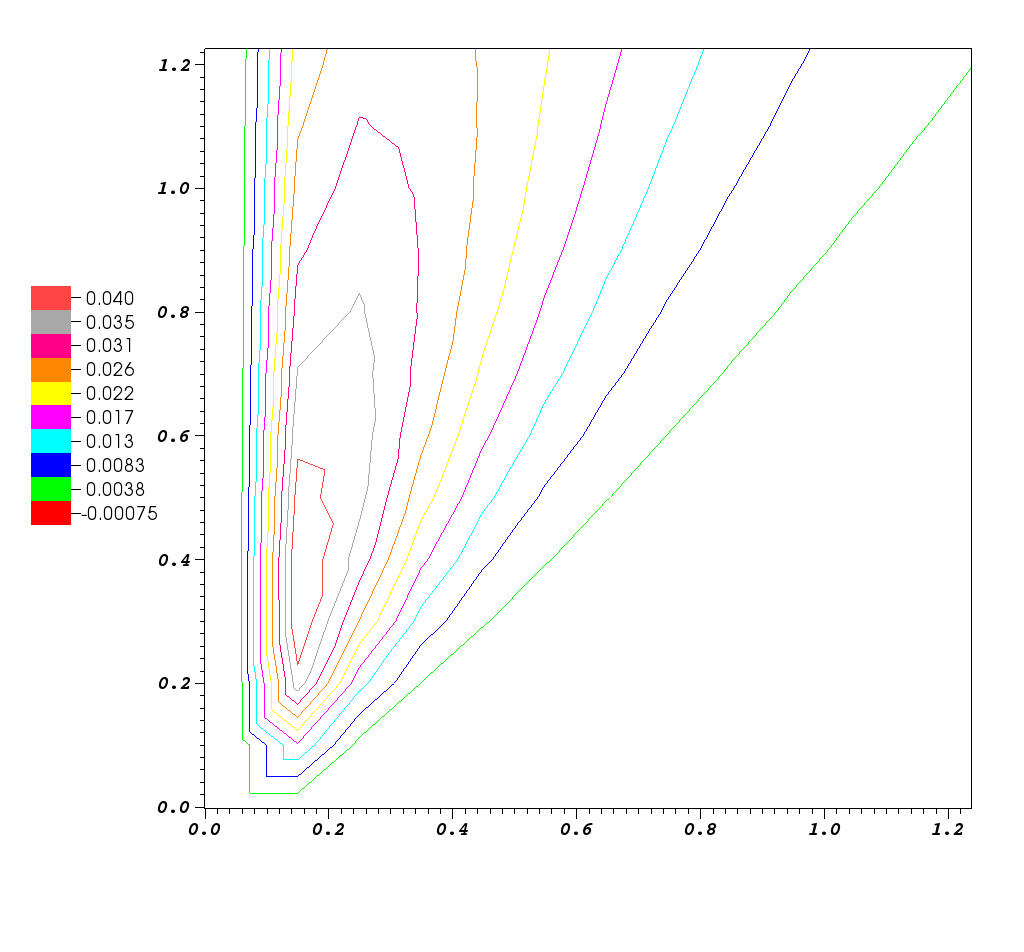} \\
  \includegraphics[width=0.4\linewidth]{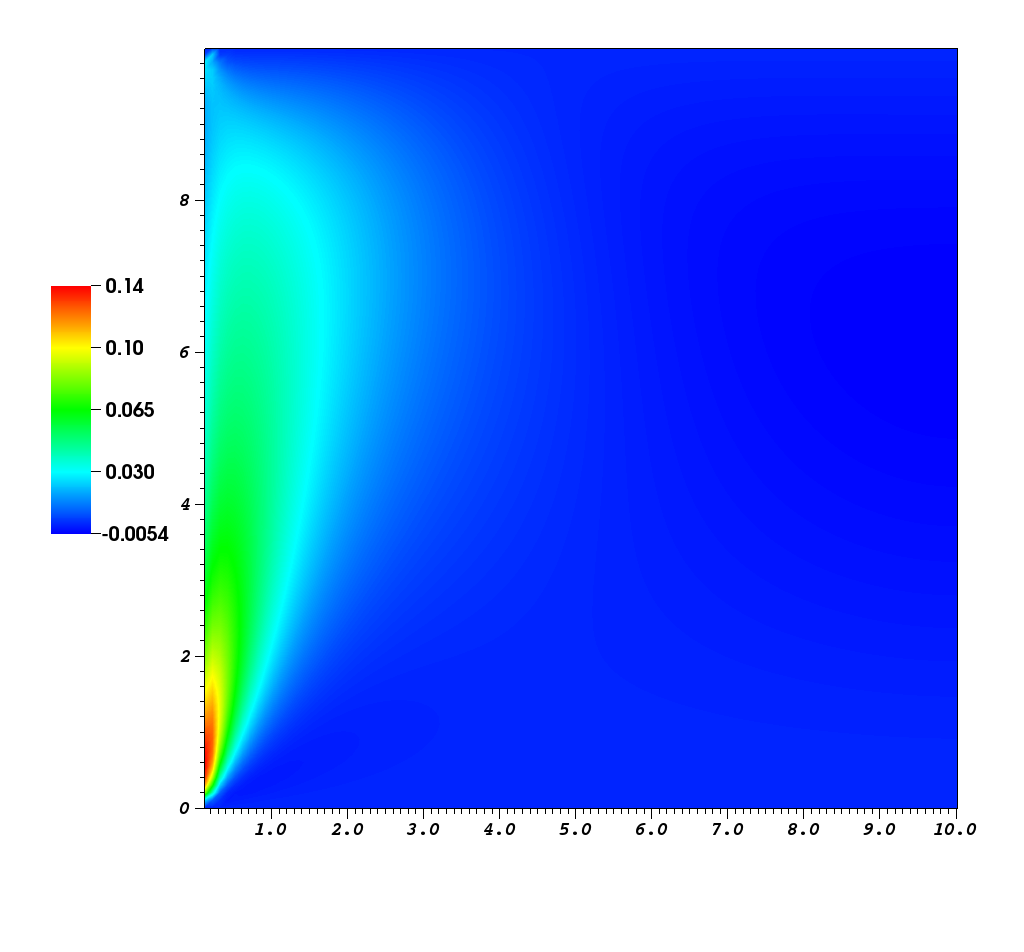}& \includegraphics[width=0.4\linewidth]{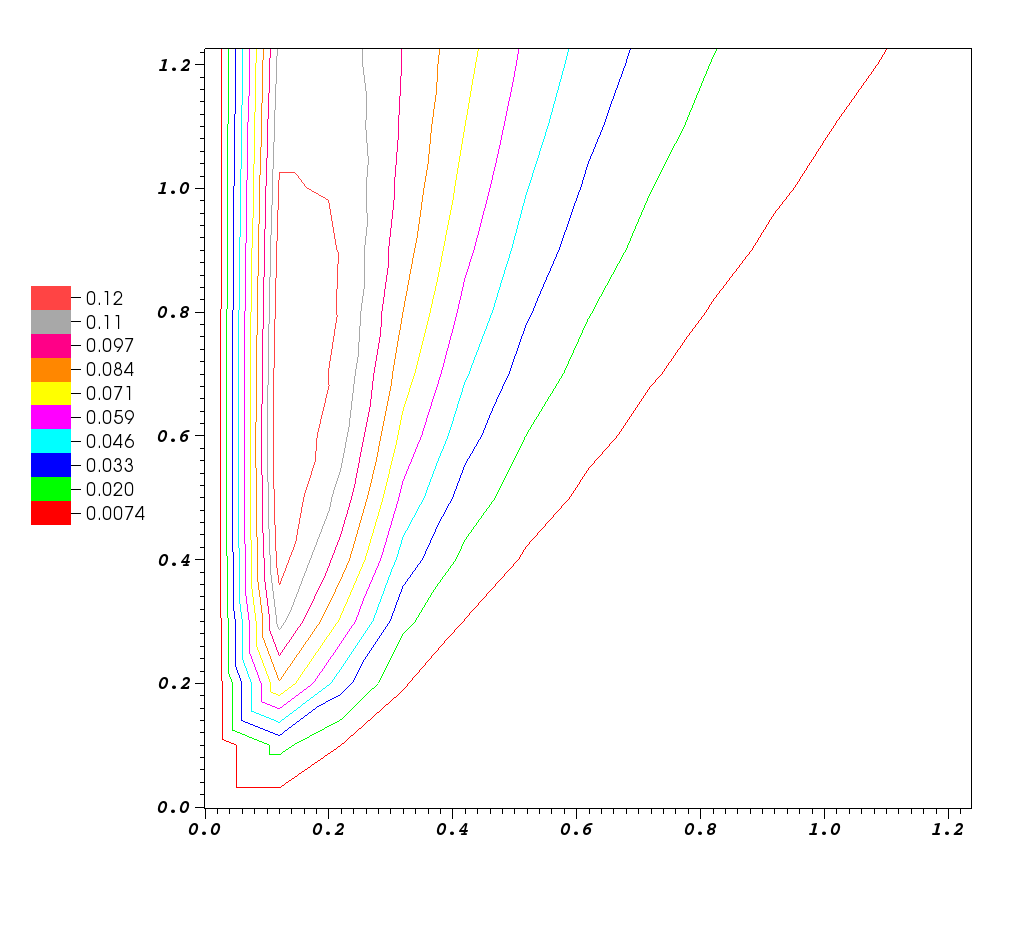}
\end{tabular}
\caption{Contour plots of the velocity component $w$ with different radii $\sigma$. From top to bottom, $\sigma=0.1, 0.05, 0.02$, respectively.  Left column, filled contour plots; right column, contour plots. The parameters are $\nu=0.01$,  $\gamma=0.1 \pi$, $Re=\frac{\gamma}{\nu}=10 \pi$.}
\label{ThrFig}
\end{figure}

In Fig. \ref{meri}, we show the vector plot and the streamline plot of the meridional flow fields (the $u$ and $w$ components) in (a) and (b), respectively,  and the filled contour plot of the pressure in (c) in the region of interest $(r, z) \in [0.1, 1] \times [0, 2]$. The parameters in this simulation are $\sigma=0.1, \nu=0.02, \gamma=\pi$ which defines a rotational Reynolds number $Re=50\pi$.  It demonstrates the phenomenon of boundary layer pumping, a distinguishing process in rotating fluids, cf. \cite{holton2004}. The rotating cylinder imparts to the bulk of the fluids a circular motion. Away from the plane boundary, the radial pressure gradient is approximately balanced by the centrifugal force of the rotating fluid. Near the plane surface, viscosity effect inhibits the fluid motion, giving rise to an unbalanced radial pressure gradient. This pressure gradient creates a radial  inflow towards the axis of the cylinder, which leads to an updraft due to incompressibility.

\begin{figure}[h!]
\centering
\begin{tabular}{ccc}
\includegraphics[width=0.35\linewidth]{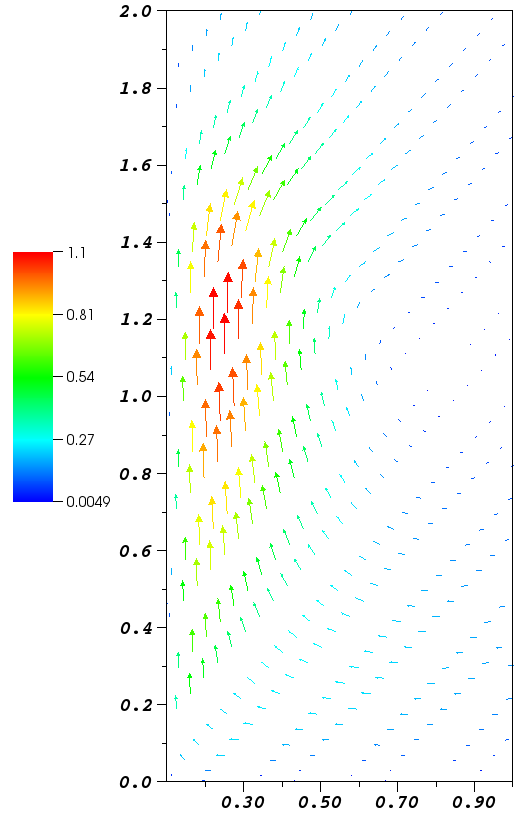}& \includegraphics[width=0.3\linewidth]{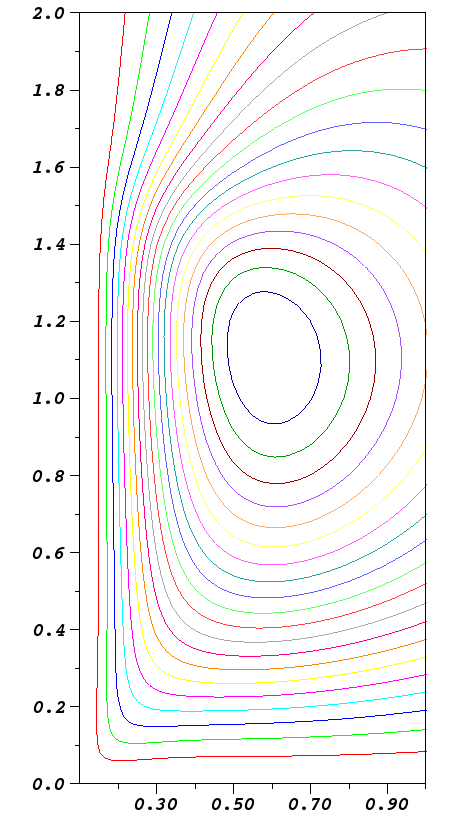} & \includegraphics[width=0.35\linewidth]{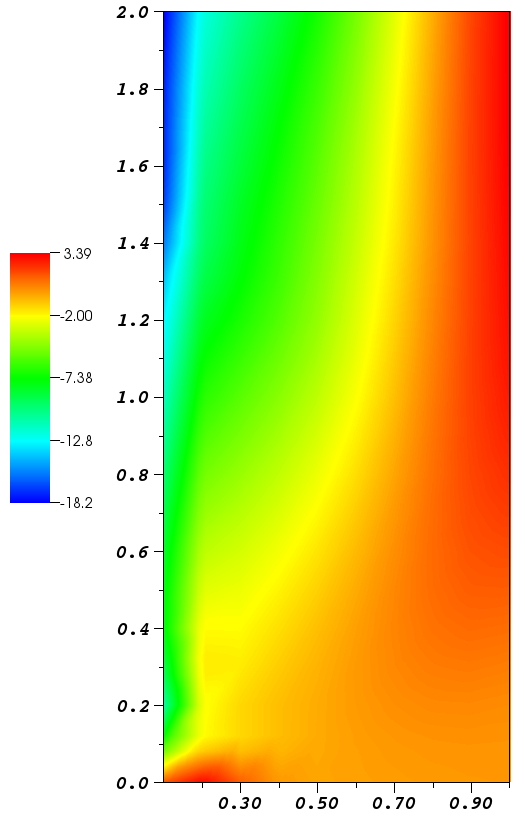}   \\  
(a)  & (b) & (c)
\end{tabular}
\caption{(a) Vector plot of the meridional flow fields ($u$ and $w$); (b) streamline plot of the meridional flow fields; (c) filled contour plot of the pressure. The region shown is $(r, z) \in [0.1, 1] \times [0, 2]$.  The parameters are $\sigma=0.1, \nu=0.02$,  $\gamma=\pi$, $Re=50 \pi$.}
\label{meri}
\end{figure}

In our model, the rotating cylinder of small radius plays the role of a tornado funnel, or the core of a potential line vortex. In the simulations above, the cylinder is rotating at a uniform constant speed. Our result shows that only a single-celled vortex is possible in this setting. In real tornadoes or in tornado vortex chambers, two-celled or multi-celled vortices are observed, cf. \cite{Rotunno2013, Trapp2000} and references therein for the study of vortex breakdown. The downdraft due to vertical pressure gradient is responsible for the creation of multi-celled vortex.  In Serrin's vortex model \cite{Serrin1972}, this vertical pressure gradient is supplied by a pressure boundary condition on the plane--an extra degree of freedom from the model. In our model, we create this downdraft (starting at the top of the cylinder) by varying the rotation speed of the cylinder. Specifically, we impose the following boundary condition for the azimuthal velocity on the cylinder
\begin{eqnarray}\label{Vboun}
v|_{\Gamma_i}=\left\{
\begin{array}{ll}
10 &  \mbox{if $z \leq 2$};\\
1.25\times (10-z) & \mbox{if $ 2 \leq z \leq 10$}.
\end{array}
\right.
\end{eqnarray} 
One sees that the rotational speed of the cylinder decreases when $z \geq 2$. Hence by Bernoulli's law away from the cylinder, the pressure increases with height which leads to the downdraft. A two-celled vortex formed. Fig. \ref{Varyforce} shows the vector plot and streamline plot of the meridional flow under the influence of boundary condition \eqref{Vboun}.

\begin{figure}[h!]
\centering
\begin{tabular}{cc}
\includegraphics[width=0.4\linewidth]{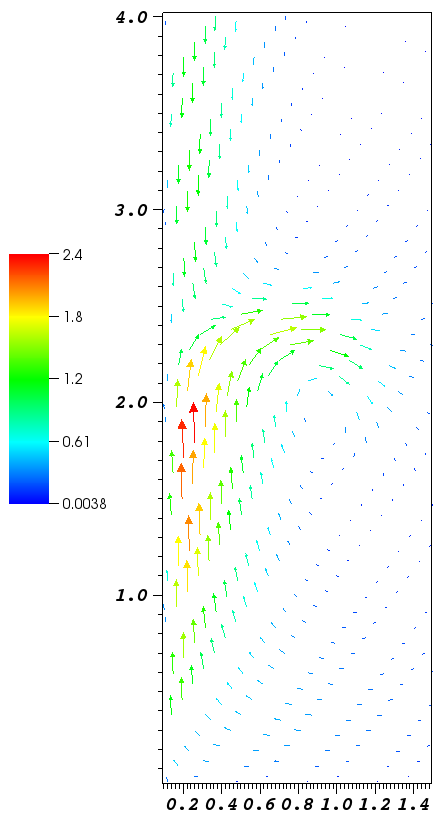} & \includegraphics[width=0.4\linewidth]{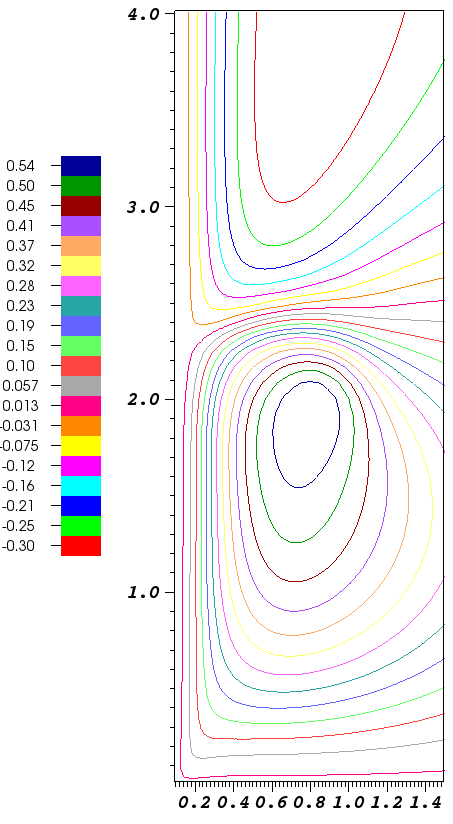}   \\  
(a)  & (b)
\end{tabular}
\caption{(a) Vector plot of the meridional flow fields ($u$ and $w$). (b) streamline plot of the meridional flow fields: positive value, counter clockwise; negative value, clockwise. The boundary condition at the cylinder for $v$ is given by \eqref{Vboun}. The region shown is $(r, z) \in [0.1, 1.5] \times [0, 4]$.  The parameters are $\sigma=0.1, \nu=0.02$,   $Re=100 \pi$.}
\label{Varyforce}
\end{figure}

More complex flow patterns can be generated by tuning the speed of the rotating cylinder. For instance, Fig. \ref{2Varyforce} illustrates the meridional flow fields with the boundary condition for the azimuthal velocity on the cylinder
\begin{eqnarray}\label{2Vboun}
v|_{\Gamma_i}=\left\{
\begin{array}{lll}
4 &  \mbox{if $z \leq 2$};\\
4+2\times (z-2) & \mbox{if $ 2 \leq z \leq 4$};\\
\frac{4}{3}\times (10-z) & \mbox{if $ 4 \leq z \leq 10$}.
\end{array}
\right.
\end{eqnarray} 
Hence the cylinder rotates at constant speed at lower level, then linearly increases to its maximum, and finally decreases to zero. Accordingly, one observes the converging-updraft flow near the corner, then the intensifying updraft due to the drop of the pressure, and finally the downdraft at the upper level. 

\begin{figure}[h!]
\centering
\begin{tabular}{cc}
\includegraphics[width=0.4\linewidth]{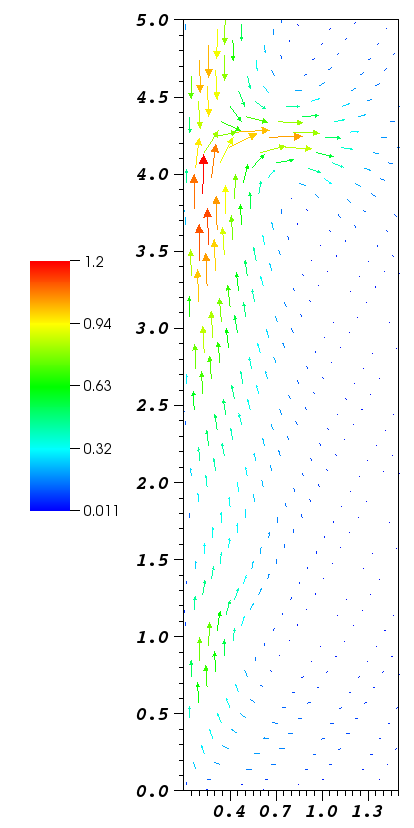} & \includegraphics[width=0.4\linewidth]{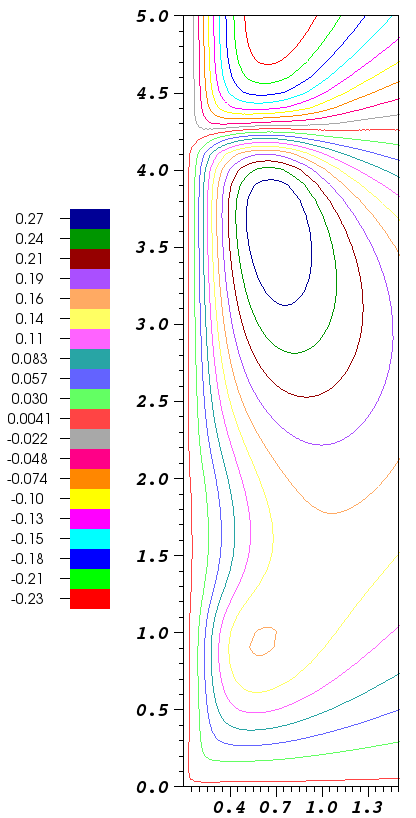}   \\  
(a)  & (b)
\end{tabular}
\caption{(a) Vector plot of the meridional flow fields ($u$ and $w$). (b) streamline plot of the meridional flow fields: positive value, counter clockwise; negative value, clockwise. The boundary condition at the cylinder for $v$ is given by \eqref{2Vboun}. The region shown is $(r, z) \in [0.1, 1.5] \times [0, 5]$.  The parameters are $\sigma=0.1, \nu=0.02$,   $Re=80 \pi$.}
\label{2Varyforce}
\end{figure}

\clearpage

\section*{Acknowledgement}
This work was supported in part by ONR grant N00014-15-1-2385 and by the Research Fund of Indiana University. The authors wish to thank Zhongwei Shen, Joseph Tribbia, and Xiaoming Wang for helpful discussions.

\bibliography{multiphase}{}

\def\cprime{$'$} \def\cprime{$'$} \def\cprime{$'$}
\begin{thebibliography}{10}

\bibitem{AWXY2015}
R.~Alexandre, Y.-G. Wang, C.-J. Xu, and T.~Yang.
\newblock Well-posedness of the {P}randtl equation in {S}obolev spaces.
\newblock {\em J. Amer. Math. Soc.}, 28(3):745--784, 2015.

\bibitem{Amick1984}
Charles~J. Amick.
\newblock Existence of solutions to the nonhomogeneous steady {N}avier-{S}tokes
  equations.
\newblock {\em Indiana Univ. Math. J.}, 33(6):817--830, 1984.

\bibitem{BMMW2010}
R.~Brown, I.~Mitrea, M.~Mitrea, and M.~Wright.
\newblock Mixed boundary value problems for the {S}tokes system.
\newblock {\em Trans. Amer. Math. Soc.}, 362(3):1211--1230, 2010.

\bibitem{BDSS2014}
Pavel Bělík, Douglas~P. Dokken, Kurt Scholz, and Mikhail~M. Shvartsman.
\newblock Fractal powers in serrin's swirling vortex solutions.
\newblock {\em Asymptotic Analysis}, 90(1-2):105--132, 2014.

\bibitem{EEn1997}
Weinan E and Bjorn Engquist.
\newblock Blowup of solutions of the unsteady {P}randtl's equation.
\newblock {\em Comm. Pure Appl. Math.}, 50(12):1287--1293, 1997.

\bibitem{FKV1988}
E.~B. Fabes, C.~E. Kenig, and G.~C. Verchota.
\newblock The {D}irichlet problem for the {S}tokes system on {L}ipschitz
  domains.
\newblock {\em Duke Math. J.}, 57(3):769--793, 1988.

\bibitem{GeDo2010}
David G{\'e}rard-Varet and Emmanuel Dormy.
\newblock On the ill-posedness of the {P}randtl equation.
\newblock {\em J. Amer. Math. Soc.}, 23(2):591--609, 2010.

\bibitem{Goldshtik1990}
M.~A. Gol{\cprime}dshtik.
\newblock Viscous-flow paradoxes.
\newblock In {\em Annual review of fluid mechanics, {V}ol.\ 22}, pages
  441--472. Annual Reviews, Palo Alto, CA, 1990.

\bibitem{Goldshtik1960}
M.A. Gol'dshtik.
\newblock A paradoxical solution of the navier-stokes equations.
\newblock {\em Journal of Applied Mathematics and Mechanics}, 24(4):913 -- 929,
  1960.

\bibitem{HHJT2016}
Makram Hamouda, Daozhi Han, Chang-Yeol Jung, and Roger Temam.
\newblock Boundary layers for the 3{D} primitive equations in a cube: the
  subcritical modes.
\newblock {\em Submitted}, 2016.

\bibitem{HJT2016}
Makram Hamouda, Chang-Yeol Jung, and Roger Temam.
\newblock Boundary layers for the 3{D} primitive equations in a cube: the
  supercritical modes.
\newblock {\em Nonlinear Anal.}, 132:288--317, 2016.

\bibitem{holton2004}
J.R. Holton.
\newblock {\em An Introduction to Dynamic Meteorology}.
\newblock Number v. 1 in An Introduction to Dynamic Meteorology. Elsevier
  Academic Press, 2004.

\bibitem{Hopf1951}
E.~Hopf.
\newblock Uber die aufangswertaufgabe f\"{u}r die hydrodynamischen
  grundgliechungen.
\newblock {\em Math. Nachr.}, 4:213--231, 1941.

\bibitem{KPR2013}
Mikhail~V. Korobkov, Konstantin Pileckas, and Remigio Russo.
\newblock On the flux problem in the theory of steady {N}avier-{S}tokes
  equations with nonhomogeneous boundary conditions.
\newblock {\em Arch. Ration. Mech. Anal.}, 207(1):185--213, 2013.

\bibitem{KPR2015}
Mikhail~V. Korobkov, Konstantin Pileckas, and Remigio Russo.
\newblock Solution of {L}eray's problem for stationary {N}avier-{S}tokes
  equations in plane and axially symmetric spatial domains.
\newblock {\em Ann. of Math. (2)}, 181(2):769--807, 2015.

\bibitem{Ladyzhenskaya1969}
O.~A. Ladyzhenskaya.
\newblock {\em The mathematical theory of viscous incompressible flow}.
\newblock Second English edition, revised and enlarged. Translated from the
  Russian by Richard A. Silverman and John Chu. Mathematics and its
  Applications, Vol. 2. Gordon and Breach Science Publishers, New York, 1969.

\bibitem{Lanzani2008}
Loredana Lanzani, Luca Capogna, and Russell~M. Brown.
\newblock The mixed problem in {L}$^p$ for some two-dimensional lipschitz
  domains.
\newblock {\em Mathematische Annalen}, 342(1):91--124, 2008.

\bibitem{MaSh2011}
Vladimir Maz{\cprime}ya and Tatyana Shaposhnikova.
\newblock Recent progress in elliptic equations and systems of arbitrary order
  with rough coefficients in {L}ipschitz domains.
\newblock {\em Bull. Math. Sci.}, 1(1):33--77, 2011.

\bibitem{MiTa2001}
Marius Mitrea and Michael Taylor.
\newblock Navier-{S}tokes equations on {L}ipschitz domains in {R}iemannian
  manifolds.
\newblock {\em Math. Ann.}, 321(4):955--987, 2001.

\bibitem{OlSa1997}
O.~A. Oleinik and V.~N. Samokhin.
\newblock {\em Mathematical models in boundary layer theory}, volume~15 of {\em
  Applied Mathematics and Mathematical Computation}.
\newblock Chapman \& Hall/CRC, Boca Raton, FL, 1999.

\bibitem{Rotunno2013}
Richard Rotunno.
\newblock The fluid dynamics of tornadoes.
\newblock In {\em Annual review of fluid mechanics. {V}olume 45, 2013},
  volume~45 of {\em Annu. Rev. Fluid Mech.}, pages 59--84. Annual Reviews, Palo
  Alto, CA, 2013.

\bibitem{SaCa1998}
Marco Sammartino and Russel~E. Caflisch.
\newblock Zero viscosity limit for analytic solutions of the {N}avier-{S}tokes
  equation on a half-space. {II}. {C}onstruction of the {N}avier-{S}tokes
  solution.
\newblock {\em Comm. Math. Phys.}, 192(2):463--491, 1998.

\bibitem{Serrin1972}
J.~Serrin.
\newblock The swirling vortex.
\newblock {\em Philosophical Transactions of the Royal Society of London.
  Series A, Mathematical and Physical Sciences}, 271(1214):325--360, 1972.

\bibitem{Shen1995}
Zhong~Wei Shen.
\newblock A note on the {D}irichlet problem for the {S}tokes system in
  {L}ipschitz domains.
\newblock {\em Proc. Amer. Math. Soc.}, 123(3):801--811, 1995.

\bibitem{ShKe1987}
Shagi-Di Shih and R.~Bruce Kellogg.
\newblock Asymptotic analysis of a singular perturbation problem.
\newblock {\em SIAM J. Math. Anal.}, 18(5):1467--1511, 1987.

\bibitem{TOB2013}
J.~L. Taylor, K.~A. Ott, and R.~M. Brown.
\newblock The mixed problem in {L}ipschitz domains with general decompositions
  of the boundary.
\newblock {\em Trans. Amer. Math. Soc.}, 365(6):2895--2930, 2013.

\bibitem{Temam1977}
R.~Temam.
\newblock {\em Navier-Stokes Equations: Theory and Numerical Analysis}.
\newblock AMS/Chelsea publication. AMS Chelsea Pub., 2001.

\bibitem{Trapp2000}
{R. Jeffrey} Trapp.
\newblock A clarification of vortex breakdown and tornadogenesis.
\newblock {\em Monthly Weather Review}, 128(3):888--895, 3 2000.

\bibitem{XiZh2004}
Zhouping Xin and Liqun Zhang.
\newblock On the global existence of solutions to the {P}randtl's system.
\newblock {\em Adv. Math.}, 181(1):88--133, 2004.

\end{thebibliography}
\bibliographystyle{plain}

\end{document}